\documentclass[twoside,11pt]{amsart}

\usepackage[utf8]{inputenc}
\usepackage[english]{babel}
\usepackage[fixlanguage]{babelbib}
\usepackage{amssymb, amsfonts, amsmath, amsthm, amscd, mathtools, thmtools, tikz, xcolor, hyperref}
\usepackage[numbers,square]{natbib}
\usepackage[all]{xy}
\usepackage{ a4wide, xfrac, calc, indentfirst, cleveref, aligned-overset, easyReview, enumitem, float, tikz-cd, microtype, tensor, epsfig, stackengine, scalerel, wasysym, color, graphicx }

\newcounter{contador}
\numberwithin{contador}{section}

\newtheorem{theorem}[contador]{Theorem}
\newtheorem{prop}[contador]{Proposition}
\newtheorem{lemma}[contador]{Lemma}
\newtheorem{corollary}[contador]{Corollary}

\theoremstyle{definition}
\newtheorem{defi}[contador]{Definition}
\newtheorem{obs}[contador]{Remark}
\newtheorem{exe}[contador]{Example}


\newcommand{\cS}{\mathcal{S}}
\newcommand{\T}{\mathcal{T}}

\newcommand{\Not}[2][0]{	
	\setcounter{enumi}{#1}
	\renewcommand{\theenumi}{#2\arabic{enumi}}
	\renewcommand{\labelenumi}{(\theenumi)}
	\setlength{\itemindent}{\widthof{#2}}
	\setlength{\itemsep}{4pt}
}


\raggedbottom
\setlength {\marginparwidth }{2cm}

\title{The Szendrei Expansion of Restriction Semigroupoids}
\author[Haag, Lautenschlaeger and Tamusiunas]{Rafael Haag, Wesley G. Lautenschlaeger and Thaísa Tamusiunas$^*$}
\address{Instituto de Matem\'{a}tica, Universidade Federal do Rio Grande do Sul,  Av. Bento Gon\c{c}alves, 9500, 91509-900. Porto Alegre-RS, Brazil}
\email{rafaelpetasny@gmail.com}
\email{wesleyglautenschlaeger@gmail.com}
\email{thaisa.tamusiunas@gmail.com}
\thanks{$^*$ Corresponding author}
\date{} 

\begin{document}

    \subjclass[2020]{Primary 18B40. Secondary 18A05.} 
    \keywords{restriction semigroupoid, semigroupoid, representation theorem, Szendrei expansion.}
    
    \begin{abstract}
        We introduce the concept of a restriction semigroupoid \(\cS\), which unifies the notion of restriction semigroups and restriction categories within a single structure. We prove a representation theorem, showing that every restriction semigroupoid can be embedded into a {certain} category of partial maps. Furthermore, we construct the Szendrei expansion \(Sz(\cS)\) of \(\cS\) and establish that each premorphism between two restriction semigroupoids \(\cS\) and \(\T\) is uniquely factorized by a morphism between the Szendrei expansion \(Sz(\cS)\) and \(\T\).
    \end{abstract}

    \maketitle

    \section{Introduction}

    Semigroupoids were first defined by B. Tilson \cite{tilson1987categories} in the context of the monoid decomposition theory. Roughly speaking, a Tilson semigroupoid is a category which may fail to have identities. A more general notion of semigroupoids is due to R. Exel \cite{exel2011semigroupoid}, introduced in the study of $C^\ast$-algebras. In this case, a semigroupoid is a set endowed with a partially defined binary operation which is associative. In \cite{cordeiro2023etale}, L. G. Cordeiro proved that Tilson semigroupoids are precisely the Exel semigroupoids that admit an underlying directed graph structure.

    The class of semigroupoids includes two important classes of algebraic structures: semigroups, where composition is total, and categories, where composition is partial but each object has an identity. This viewpoint allows results and constructions to be formulated once and applied across both settings. Moreover, it not only unifies existing theories but also extends their applicability to a wider range of examples. It provides a natural home for objects that arise in applications, including free semigroupoids \cite{KribsPower2004}, Markov semigroupoids \cite{exel2011semigroupoid}, and étale inverse semigroupoids \cite{cordeiro2023etale}.

    Against this backdrop, the notion of a left restriction structure can be viewed as a weakening of an inverse property and fits naturally within the semigroupoid framework. In the specific case of restriction semigroupoids, it generalizes both left restriction semigroups and left restriction categories. On the one hand, left restriction semigroups appeared as a generalization for left PP monoids used in the homological classification of semigroups \cite{fountain1977}. In some works, they are also referred to as \emph{weakly left $E$-ample semigroups}. The class of left restriction semigroups includes, for example, the semigroup of partial left maps \( \mathcal{PT}^{op}(X) \) of a set \( X \), as well as all inverse semigroups. Further details on left restriction semigroups can be found in C. Hollings' survey \cite{hollings2009PP}. On the other hand, left restriction categories arose from dominical categories, introduced to achieve an algebratization of Gödel's generalized incompleteness theorem, in the context of theoretical computer science. Several examples of restriction categories can be found in the work of J. Cockett and S. Lack \cite{cockett2002restriction}. By defining left restriction semigroupoids, we also encompass inverse semigroupoids, which, in general, may not fall within the scope of either left restriction semigroups or categories. Some properties concerning inverse semigroupoids and its intrinsic relationship with groupoids can be found in \cite{tamusiunas2023inverse}.

    A central topic in the study of abstract algebraic structures is their realization through more concrete models. In the context of inverse semigroups, V. Wagner \cite{wagner1952semigroups} and G. Preston \cite{preston1954semigroups} proved, independently, that every inverse semigroup can be embedded in some inverse semigroup $\mathcal{J}(X)$ of partial bijections of a set $X$. This result is known as the Wagner-Preston Theorem for inverse semigroups. The theorem has been generalized in two directions: to left restriction semigroups, where it has been shown that every such semigroup can be embedded into a left restriction semigroup $\mathcal{PT}(X)^{op}$ of partial left maps of a set $X$ (see, for instance, \cite[Theorem 4.12]{hollings2009PP}); and to inverse semigroupoids, proving that every inverse semigroupoid can be embedded into an inverse category $\mathcal{J}^{op}(\pi)$ of partial bijections between fibers of a function $\pi \colon X \to Y$ (see \cite[Theorem 2.32]{cordeiro2023etale}).

    In this work, we present a representation theorem that unifies the cases of restriction semigroups and inverse semigroupoids within a single framework. Specifically, we define a left restriction category $\mathcal{PT}^{op}(\pi)$ of partial left maps between the fibers of a function $\pi \colon X \to Y$, and establish the following representation theorem for left restriction semigroupoids, which will be proved in Section 3.

    \vspace{0.5cm} 

    \noindent\textbf{Theorem A.} Let $\cS$ be a left restriction semigroupoid. Then there is a restriction embedding $\cS \to \mathcal{PT}^{op}(\pi)$, for some function $\pi \colon X \to Y$.

    \vspace{0.5cm} 

    The main result of this paper is the construction of a Szendrei expansion for left restriction semigroupoids. The Szendrei expansion was defined by M. Szendrei in \cite{szendrei1989note} as a characterization of J. Birget and J. Rhodes {expansion} for semigroups, when the given semigroup is a group. Such characterization became relevant in the theory of partial actions when R. Exel \cite{exel1998partial} and J. Kellendonk and M. Lawson \cite{lawson2004actions} observed that unitary premorphisms from a group $G$ to an inverse semigroup $S$ are in correspondence with monoid morphisms from the Szendrei expansion $Sz(G)$ of $G$, which is itself an inverse semigroup, to $S$. This kind of information allows us to understand the behavior of partial actions in terms of global ones, thereby connecting partial and global theory.
    
    In the context of restriction semigroupoids, we prove that every premorphism can be uniquely factorized by a restriction morphism through its Szendrei expansion, as stated in the theorem below. The proof will be presented in Section 4. {In particular,} this result is valid for restriction categories, complementing the investigation of Cockett and Lack in \cite{cockett2002restriction}.

    \vspace{0.5cm} 

    \noindent\textbf{Theorem B.} Let $\cS$ and $\T$ be left restriction semigroupoids. If $\varphi : \cS \to \T$ is a premorphism, then there is a unique restriction morphism $\overline{\varphi} : Sz(\cS) \to \T$ such that $\varphi = \overline{\varphi} \iota$. That is, the following diagram commutes:
        
        \begin{center}
            \begin{tikzpicture}
                \tikzstyle{every path}=[draw, ->];
        
                \node (S) at (0,0) {$\mathcal{S}$};
                \node (Sz) at (0,-2) {$Sz(\mathcal{S})$};
                \node (T) at (2,0) {$\mathcal{T}$};
        
                \path (S) to node[above]{$\varphi$} (T);
                \path (S) to node[left]{$\iota$} (Sz);
                \path[dashed] (Sz) to node[below right]{$\overline{\varphi}$} (T);
            \end{tikzpicture}
        \end{center}
        Reciprocally, if $\overline{\varphi} : Sz(\cS) \to \T$ is a restriction morphism, then $\varphi = \overline{\varphi}\iota$ is a premorphism.

    \vspace{0.5cm} 

   The above theorem generalizes the correspondences \cite[Theorem 2.4]{lawson2004actions} for groups, \cite[Theorem 6.17]{lawson2006expansions} for inverse semigroups, \cite[Theorem 5.2]{gould2009partial} for left restriction semigroups, and \cite[Theorem 3.3]{tamusiunas2023inverse} for groupoids. Furthermore, when the corresponding restriction morphism and premorphism are unitary, it also recovers \cite[Theorem 4.1]{hollings2007monoids} for monoids. {We also note that Theorem B, together with the ESN Theorem for restriction semigroupoids \cite[Theorem 4.7]{lrspgesn}, can be used to generalize \cite[Proposition 4.6]{gilbert2005actions} for ordered groupoids and \cite[Theorem 5.1]{gould2011actions} for inductive constellations.}
   
    This paper is structured in the following way: in the second section, we recall the definitions for semigroupoids and graphed semigroupoids. We introduce left restriction semigroupoids and prove that every such semigroupoid is a graphed semigroupoid. In the third section, we present a representation theorem for left restriction semigroupoids, proving that every such semigroupoid can be embedded in a left restriction category of partial maps between fibers of a function. In particular, this representation theorem provides a generalization for the Wagner-Preston Theorem for inverse semigroups \cite{wagner1952semigroups}. In Section 4, we present the construction of a Szendrei expansion $Sz(\cS)$ of a left restriction semigroupoid $\cS$, and prove that premorphisms from $\cS$ to a left restriction semigroupoid $\mathcal{T}$ are in correspondence with restriction morphisms from $Sz(\cS)$, which is also a left restriction semigroupoid, to $\mathcal{T}$. In the final section, we prove that $Sz(\cS)$ is, in fact, an expansion in the sense of \cite[\textsection 2]{bierget1984expansions}. This justifies referring to it as the Szendrei \emph{expansion}. 
    
    \section{Left restriction semigroupoids}

    In this section, we recall the definitions of semigroupoids, graphed semigroupoids, and categorical semigroupoids. Then, we introduce the notion of left restriction semigroupoids and prove that every left restriction semigroupoid is categorical.
    
      \begin{defi} \label{def:semigroupoid} \cite[Definition 2.1]{exel2011semigroupoid}
            A \emph{semigroupoid} is a triple $\cS = (\cS,\cS^{(2)},\star)$ such that $\cS$ is a set, $\cS^{(2)}$ is a subset of $\cS \times \cS$, and $\star \colon \cS^{(2)} \to \cS$ is an operation which is associative in the following sense: if $r,s,t \in \cS$ are such that either
            \begin{enumerate} \Not{s}
                \item $(s,t) \in \cS^{(2)}$ and $(t,r) \in \cS^{(2)}$, or \label{s1}
                \item $(s,t) \in \cS^{(2)}$ and $(s \star t,r) \in \cS^{(2)}$, or \label{s2}
                \item $(t,r) \in \cS^{(2)}$ and $(s,t \star r) \in \cS^{(2)}$, \label{s3}
            \end{enumerate}
            then all of $(s,t)$, $(t,r)$, $(s \star t,r)$ and $(s,t \star r)$ lie in $\cS^{(2)}$, and $(s \star t) \star r = s \star (t \star r)$.
        \end{defi}
    
    At times, we will refer to \((s,t) \in \cS^{(2)}\) as ‘\(st\) is defined’, omitting the composition symbol and using concatenation to represent it. Additionally, we will use the notation \((\cS, \star)\) for a semigroupoid and, when there is no risk of confusion regarding the operation, simply \(\cS\). Semigroups and categories are particular examples of semigroupoids. For further details and illustrative examples of semigroupoids, readers are referred to \cite{cordeiro2023etale} and \cite{haagtamusiunas1}.\\
    
    
    A \emph{directed graph} is a quadruple $(G_0, G_1, D, R)$, where $G_0$ is the set of vertices, $G_1$ is the set of directed edges (which we call \emph{arrows}) and $D, R : G_1 \to G_0$ are the domain and range functions.
    
    \begin{defi} \cite[Appendix B]{tilson1987categories}
        A \emph{graphed semigroupoid} (also called \emph{Tilson semigroupoid}) is a quintuple $(G_0, G_1, D, R, \star)$, where:
        \begin{enumerate} \Not{gs}
            \item $(G_0, G_1, D, R)$ is a directed graph; \label{gs1}
        
            \item $\star$ is a function that, for each pair of arrows $(g, h)$ with $D(g) = R(h)$, associates an arrow $\star(g, h) := gh$ with $D(gh) = D(h)$ and $R(gh) = R(g)$; \label{gs2}
        
            \item $\star$ is associative, in the sense that if $g, h, k \in G_1$ are such that $D(g) = R(h)$ and $D(h) = R(k)$, then $(gh)k = g(hk)$ holds. \label{gs3}
        \end{enumerate}
    \end{defi}
    
    Not every semigroupoid is a graphed one. In fact, in \cite{exel2008inverse} Exel defined a class of semigroupoids called \emph{categorical semigroupoids}, and Cordeiro proved in \cite{cordeiro2023etale} that this class is precisely the class of graphed semigroupoids, as we will state it in the following.
    
    \begin{defi} \cite[Definition 19.1]{exel2008inverse}
        Let $\cS$ be a semigroupoid. Given $s \in \cS$, we set:
        \begin{align*}
            \cS^s = \{ t \in \cS : (s,t) \in \cS^{(2)}\} \quad \text{and} \quad \cS_s = \{ t \in \cS : (t,s) \in \cS^{(2)}\}.
        \end{align*}
    
        We say that $\cS$ is a \emph{categorical semigroupoid} if for all $s,t \in \cS$, the sets $\cS^s$ and $\cS^t$ are either equal or disjoint. 
    \end{defi}
    
    Equivalently, $\cS$ is categorical if, for all $s,t \in \cS$, the sets $\cS_s$ and $\cS_t$ are either equal or disjoint, as stated in \cite[Proposition 2.14]{cordeiro2023etale}. We say that a semigroupoid $(S,S^{(2)},\star)$ can be graphed if there is a set $S_0$ and functions $D,R \colon S \to S_0$ such that $(S_0,S,D,R,\star)$ is a graphed semigroupoid and $S^{(2)} = \{ (s,t) \colon D(s) = R(t) \}$.
    
    \begin{theorem} \cite[Theorem 2.5]{cordeiro2023etale} \label{cord24}
        Every categorical semigroupoid can be graphed. Conversely, every graphed semigroupoid is categorical.
    \end{theorem}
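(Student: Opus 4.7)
The plan is to prove the two implications separately. The easy direction shows every graphed semigroupoid is categorical: axiom (gs2) forces $\cS^s = \{t \in G_1 : R(t) = D(s)\}$, a set determined solely by $D(s)$. Consequently, if $\cS^s$ and $\cS^{s'}$ share any element $t$, then $D(s) = R(t) = D(s')$, so the two sets coincide; otherwise they are disjoint.

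For the converse, let $\cS$ be a categorical semigroupoid. I would take $G_1 := \cS$ and build $G_0$ as a quotient of two disjoint copies of $\cS$. Set $X := \cS \sqcup \cS$, writing $s_L$ and $s_R$ for the two copies of an element $s$, and define a symmetric relation $\approx$ on $X$ by declaring
\begin{enumerate}[label=(\roman*)]
    \item $s_L \approx s'_L$ iff $\cS^s = \cS^{s'}$,
    \item $s_R \approx s'_R$ iff $\cS_s = \cS_{s'}$,
    \item $s_L \approx t_R$ iff $(s,t) \in \cS^{(2)}$.
\end{enumerate}
Then set $G_0 := X/\approx$, $D(s) := [s_L]$, $R(s) := [s_R]$, and let the graphed operation coincide with $\star$. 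With this setup, axiom (gs2) --- the equivalence $(s,t) \in \cS^{(2)} \Leftrightarrow D(s) = R(t)$ --- is immediate from (iii), provided $\approx$ is genuinely an equivalence relation.

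Verifying transitivity of $\approx$ is the main obstacle and where categoricity truly enters. Reflexivity is clear and symmetry is built in. Transitivity splits into cases depending on which copies the three elements belong to. Most cases are routine: for instance, if $s_L \approx s'_L \approx t_R$, then $\cS^s = \cS^{s'}$ and $t \in \cS^{s'}$, so $t \in \cS^s$, yielding $s_L \approx t_R$. The crucial cases are those \emph{requiring categoricity}: from $s_L \approx t_R \approx s'_L$ one obtains $(s, t), (s', t) \in \cS^{(2)}$, so $\cS^s$ and $\cS^{s'}$ both contain $t$ and are therefore equal by categoricity, giving $s_L \approx s'_L$. Dually, from $t_R \approx s_L \approx t'_R$ one obtains $(s, t), (s, t') \in \cS^{(2)}$, so $\cS_t$ and $\cS_{t'}$ both contain $s$ and coincide by the dual form of categoricity noted after the definition, giving $t_R \approx t'_R$.

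Once (gs2) is established, the rest is routine. The formulas $D(st) = D(t)$ and $R(st) = R(s)$ amount to $\cS^{st} = \cS^t$ and $\cS_{st} = \cS_s$, which follow from the associativity conditions (s1)--(s3) of the semigroupoid: given $(s,t) \in \cS^{(2)}$, one has $(st, u) \in \cS^{(2)} \Leftrightarrow (t, u) \in \cS^{(2)}$ by (s1) and (s2), and dually for the range side via (s3). Finally, axiom (gs3) is an immediate consequence of the semigroupoid's own associativity via (s1).
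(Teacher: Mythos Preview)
The paper does not give its own proof of this theorem: it is merely quoted from \cite[Theorem~2.5]{cordeiro2023etale} and used as a black box, so there is nothing in the present paper to compare your argument against.

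That said, your proposal is correct and is essentially the standard construction. A few minor remarks. Your relation $\approx$ is well set up; the only case you do not spell out is reflexivity on the $R$-side, but $\cS_s = \cS_s$ handles it. In the verification of $D(st)=D(t)$ and $R(st)=R(s)$, you attribute the domain side to (s1)--(s2) and the range side to (s3); to be precise, each of the two equalities uses one inclusion from (s1) and the reverse inclusion from (s2) or (s3), respectively. Finally, your construction silently identifies any two elements $s,s'$ with $\cS^s = \cS^{s'} = \emptyset$ under the same domain vertex; this causes no harm, since $D(s)=R(t)$ then fails for every $t$, so composability is still characterized correctly. Overall the argument is sound.
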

    
    In the following, we will introduce the concept of left and right restriction semigroupoids. First, we recall some basic definitions. Let $\cS$ be a semigroupoid. We say that $f \in \cS$ is an \emph{idempotent} if $ff$ is defined and $ff = f$. We denote by $E(\cS)$ the set of idempotents of $\cS$.  A \emph{right identity} (resp., \emph{left identity}) for an element $s \in \cS$ is an element $e \in E(\cS)$ such that $se=s$ (resp., $es=s$). Observe that not every semigroupoid has an idempotent element, and consequently, an identity element.\\
    
    A \emph{regular semigroupoid} $\cS$ is a semigroupoid in which, for all $x \in \cS$, there is $y \in \cS$ such that $(x,y), (y,x) \in \cS^{(2)}$ and $xyx = x$. If $yxy = y$ also holds, we say that $y$ is an \emph{inverse} of $x$. If the element $y$ satisfying these conditions is unique, we denote it by $y = x^{-1}$, and say that $\cS$ is an \emph{inverse semigroupoid}. 
    
    \begin{defi} \label{defsemrestesq}
        Let $\cS$ be a semigroupoid equipped with a unary operation $+ : \cS \to \cS$. We say that $\cS$ is a \emph{left restriction semigroupoid} if the following conditions are valid:
        \begin{enumerate}\Not{lr}
            \item For all $s \in \cS$, $(s^+,s) \in \cS^{(2)}$ and $s^+s = s$. \label{lr1}
            
            \item $(s^+,t^+) \in \cS^{(2)}$ if and only if $(t^+,s^+) \in \cS^{(2)}$, and in this case $s^+ t^+ = t^+ s^+$; \label{lr2}
            
            \item If $(s^+,t) \in \cS^{(2)}$, then $(s^+t)^+ = s^+t^+$.  \label{lr3}
            
            \item If $(s,t) \in \cS^{(2)}$, then $st^+ = (st)^+s$. \label{lr4}
        \end{enumerate}
    \end{defi}
    
    \begin{obs}
        \begin{enumerate}[left=0pt] \Not{2.6.}
            \item Observe that $(s,t) \in \cS^{(2)}$ if and only if $(s,t^+) \in \cS^{(2)}$. Indeed, assume that $(s,t) \in \cS^{(2)}$. By \eqref{lr1}, $t=t^+t$, hence $(s,t^+t) \in \cS^{(2)}$. By \eqref{s3}, $(t^+,t), (s,t^+t) \in \cS^{(2)}$ implies $(s,t^+) \in \cS^{(2)}$. The reverse follows by \eqref{s1} and an analogous argument. \label{261}
    
            \item The compositions in \eqref{lr3} and \eqref{lr4} are well defined. Indeed, in \eqref{lr3}, since we assume that \((s^+,t) \in \cS^{(2)}\), it follows from \eqref{261} that $(s^+,t^+) \in \cS^{(2)}$.  Similarly, in \eqref{lr4}, given that \((s,t) \in \cS^{(2)}\), we again have \((s,t^+) \in \cS^{(2)}\) by \eqref{261}. Furthermore, since $((st)^+,st) \in \cS^{(2)}$, it follows from \eqref{s3} that $((st)^+,s) \in \cS^{(2)}$. \label{262}    
        \end{enumerate}
    \end{obs}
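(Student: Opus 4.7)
The statement has two parts, both essentially bookkeeping around the associativity axioms (s1)--(s3) combined with (lr1). My plan is to handle part (1) first, since part (2) reduces to a couple of direct applications of it plus one more appeal to (s3).

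For part (1), the plan is to exploit the identity $t = t^{*}t$ from (lr1) to pivot between $(s,t)$ and $(s,t^{*})$. Assuming $(s,t)\in \cS^{(2)}$, I rewrite this as $(s,t^{*}t)\in\cS^{(2)}$. Since $(t^{*},t)\in \cS^{(2)}$ always holds by (lr1), the hypotheses of (s3) (with $r$ in the axiom replaced by $t$, $t$ by $t^{*}$, and $s$ by $s$) are met; (s3) therefore gives $(s,t^{*})\in \cS^{(2)}$. For the converse, assuming $(s,t^{*})\in \cS^{(2)}$ and using again $(t^{*},t)\in\cS^{(2)}$, this time the hypotheses of (s1) are satisfied, and (s1) yields both $(s,t^{*}t)\in \cS^{(2)}$ and $(st^{*},t)\in\cS^{(2)}$; in particular $(s,t)\in\cS^{(2)}$ via $t^{*}t = t$.

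For part (2), in (lr3) we start from $(s^{*},t)\in \cS^{(2)}$, and part (1) immediately upgrades this to $(s^{*},t^{*})\in\cS^{(2)}$, so that the product $s^{*}t^{*}$ is defined. In (lr4), the hypothesis $(s,t)\in\cS^{(2)}$ gives $(s,t^{*})\in\cS^{(2)}$ by part (1). To show $((st)^{*},s)\in\cS^{(2)}$ I would use (lr1) again, which gives $((st)^{*},st)\in\cS^{(2)}$; combining this with $(s,t)\in \cS^{(2)}$ fits the hypothesis of (s3) (viewing $st$ as the composite of the pair $(s,t)$ placed to the right of $(st)^{*}$), and (s3) then yields $((st)^{*},s)\in\cS^{(2)}$ as desired.

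The only mildly tricky point is matching the right pair of elements to the variables $r,s,t$ in each of (s1), (s2), (s3); once the correct correspondence is chosen, both parts are immediate. No deeper obstruction appears, and no axiom beyond (lr1) together with the associativity machinery is needed.
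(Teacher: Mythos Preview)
Your proposal is correct and follows essentially the same approach as the paper: both directions of part (1) use $t=t^{*}t$ from (lr1) together with (s3) and (s1), respectively, and part (2) is handled by applying part (1) plus one further use of (s3) for $((st)^{*},s)$. There is nothing to add.
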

    
    Right restriction semigroupoids are defined analogously.
    
    \begin{defi}
        Let $\cS$ be a semigroupoid equipped with a unary operation $\ast : \cS \to \cS$. We say that $\cS$ is a \emph{right restriction semigroupoid} if the following conditions are valid:
        \begin{enumerate}\Not{rr}
            \item For all $s \in \cS$, $(s,s^\ast) \in \cS^{(2)}$ and $ss^\ast = s$. \label{rr1}
            
            \item $(s^\ast,t^\ast) \in \cS^{(2)}$ if and only if $(t^\ast,s^\ast) \in \cS^{(2)}$, and in this case $s^\ast t^\ast = t^\ast s^\ast$; \label{rr2}
            
            \item If $(s,t) \in \cS^{(2)}$, then $(st^\ast)^\ast = s^\ast t^\ast$.  \label{rr3}
            
            \item If $(s,t) \in \cS^{(2)}$, then $s^\ast t = t(st)^\ast$. \label{rr4}
        \end{enumerate}
    \end{defi}

    \begin{defi}
        A (two-sided) \emph{restriction semigroupoid} is a triple $(\cS,+,\ast)$ in which $(\cS,+)$ is a left restriction semigroupoid, $(\cS,\ast)$ is a right restriction semigroupoid and that also satisfies the compatibility relation
        \begin{align*}
            (s^\ast)^+ = s^\ast \quad \text{and} \quad (s^+)^\ast = s^+,
        \end{align*}
        for all $s \in \cS$.
    \end{defi}
    
    \begin{exe}
        Every inverse semigroupoid $\cS$ is a restriction semigroupoid with right and left restrictions given, respectively, by $s^\ast = s^{-1}s$ and $s^+ = ss^{-1}$, for $s \in \cS$.
    \end{exe}
    
    \begin{obs} \label{rem_right_left}
        In this work, we will deal with left restriction semigroupoids; however, the results remain valid for right restriction semigroupoids with minor adjustments. Specifically, we define the operation \(\cdot_{op}\) on \(\cS\) by stating that \(s \cdot_{op} t\) is defined if and only if \((t, s) \in \cS^{(2)}\), and in this case, \(s \cdot_{op} t = ts\). Thereby, \(\cS\) is a right restriction semigroupoid under \(\cdot_{op}\) whenever it is a left restriction semigroupoid under the original operation, and vice versa.
    \end{obs}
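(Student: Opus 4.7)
The plan is to define the unary operation $^+$ on $(\cS, \cdot_{op})$ by $s^+ := s^*$ and then to verify \eqref{rr1}--\eqref{rr4} by translating each axiom through the order reversal back to its left-restriction counterpart. Since the assignment $\star \mapsto \cdot_{op}$ is an involution, the converse direction then follows by running the same argument in reverse. Before turning to the restriction axioms, I would confirm that $(\cS, \cdot_{op})$ is itself a semigroupoid: since $(a,b) \in \cS^{(2)}_{op}$ exactly when $(b,a) \in \cS^{(2)}$, the three associativity hypotheses \eqref{s1}--\eqref{s3} of Definition~\ref{def:semigroupoid} applied to a triple under $\cdot_{op}$ translate into the same three hypotheses applied to the reversed triple under $\star$, with \eqref{s2} and \eqref{s3} exchanging roles while \eqref{s1} is symmetric, and the identity $(st)r = s(tr)$ reads $(rt)s = r(ts)$ under $\star$, which is the corresponding instance of $\star$-associativity.

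For the restriction axioms, \eqref{rr1} is immediate from \eqref{lr1}, since $s \cdot_{op} s^+ = s^* s = s$, and \eqref{rr2} is a direct rewrite of \eqref{lr2} once one unravels composability under order reversal. The two substantive verifications are \eqref{rr3} and \eqref{rr4}. For \eqref{rr3}, computing $s \cdot_{op} t^+ = t^* s$ gives $(s \cdot_{op} t^+)^+ = (t^* s)^*$, and by \eqref{lr3} applied with $t$ and $s$ in the roles of ``$s$'' and ``$t$'' this equals $t^* s^* = s^+ \cdot_{op} t^+$. For \eqref{rr4}, the calculation $s^+ \cdot_{op} t = t s^*$ together with \eqref{lr4} applied to the pair $(t,s)$ yields $(ts)^* t$, which equals $t \cdot_{op} (s \cdot_{op} t)^+$ as required.

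The main obstacle, modest as it is, is purely bookkeeping: keeping track of which element plays which role under the order reversal and verifying that the composable pairs demanded by the left-restriction axioms really are in $\cS^{(2)}$ under $\star$. For the latter, Remark~\eqref{261} together with the associativity clauses \eqref{s2}--\eqref{s3} provide everything one needs. Apart from this the verification is mechanical, and the converse implication follows by symmetry from the involutivity of the op-construction together with the same reasoning applied with $\cdot_{op}$ in the role of $\star$.
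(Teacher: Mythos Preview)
The paper states this remark without proof; it is offered as a routine observation and no argument is supplied. Your verification is correct and fills in exactly the details one would expect: you check that the opposite operation yields a semigroupoid (with \eqref{s2} and \eqref{s3} interchanging under reversal), set $s^+ := s^*$, and then translate each of \eqref{rr1}--\eqref{rr4} back to its left-restriction counterpart, with \eqref{rr3} reducing to \eqref{lr3} and \eqref{rr4} to \eqref{lr4} after swapping the roles of the two arguments. The appeal to involutivity for the converse is the right way to finish. There is nothing to compare against in the paper, and your sketch is a sound justification of the claim.
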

    
    \begin{defi}
        Let $\cS$ be a semigroupoid. A \textit{subsemigroupoid} of $\cS$ is a triple $(\mathcal{T},\mathcal{T}^{(2)},\star)$ in which $\mathcal{T}$ is a non-empty subset of $\cS$, $\mathcal{T}^{(2)} = \mathcal{S}^{(2)} \cap (\mathcal{T} \times \mathcal{T})$ and $\star \colon \mathcal{T}^{(2)} \to \mathcal{T}$ is the restriction of the composition of $\cS$ to $\mathcal{T}$.
    \end{defi}

    In other words, a subsemigroupoid of a semigroupoid $\mathcal{S}$ is a subset $\mathcal{T}$ of $\cS$ that is closed under the composition of $\mathcal{S}$, and is regarded as a semigroupoid whose set of composable pairs consists of all \( (s, t) \in \mathcal{T} \times \mathcal{T} \) such that \( (s, t) \in \mathcal{S}^{(2)} \). Notice that if $(\cS,+)$ is a left restriction semigroupoid, then $\cS^+ = \{ s^+ \colon s \in \cS \}$ is a subsemigroupoid of $\cS$.\\
    
    For the rest of this section, assume that $\cS$ is a semigroupoid.\\
    
    Suppose that there is $E \subseteq E(\cS)$, $E \neq \emptyset$, such that if $e,f \in E$ and $(e,f) \in \cS^{(2)}$, then $(f,e) \in \cS^{(2)}$ and $ef = fe \in E$. That is, $E$ is an abelian subsemigroupoid of $E(\cS)$. Define in $E$ a partial order $\preceq_E$ given by:
    \begin{align} \label{defordememe}
        e \preceq_E f \text{ if and only if } (e,f) \in \cS^{(2)}  \text{ and } e = ef.
    \end{align} 
    The semigroupoids we consider are those that contain a subset \( E \) as described above. Such a subset \( E \) is called a \emph{distinct set} of \( \cS \). Note that \( \cS \) may have multiple distinct sets.
    
    \begin{lemma} \label{lema210}
        Assume that $\cS$ is left restriction. The following properties are valid.
        \begin{enumerate} \Not{p}
        \item $\cS^+ \subseteq E(\cS)$. \label{p1}
        \item $e=e^+$, for all $e \in \cS^+$. \label{p2}
        \item $\cS^+$ is a disjoint union of meet semilattices with respect to the partial order \( \preceq_{\cS^+} \). \label{p3}
        \end{enumerate}
    \end{lemma}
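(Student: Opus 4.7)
The plan is to work through \eqref{p1}--\eqref{p3} in order, relying on the axioms \eqref{lr1}--\eqref{lr4} together with Remark 2.6.

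For \eqref{p1}, I observe that \eqref{lr1} gives $(s^\ast,s)\in\cS^{(2)}$ and $s^\ast s = s$; Remark 2.6\eqref{261} upgrades this to $(s^\ast,s^\ast)\in\cS^{(2)}$. Applying \eqref{lr3} with $t=s$ (whose hypothesis $(s^\ast,s)\in\cS^{(2)}$ just noted) produces $(s^\ast s)^\ast = s^\ast s^\ast$, and substituting $s^\ast s = s$ yields $s^\ast = s^\ast s^\ast$, so $s^\ast\in E(\cS)$.

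For \eqref{p2}, with $e=s^\ast$ I aim to show $s^{\ast\ast}=s^\ast$. Applying \eqref{lr1} to $s^\ast$ gives $s^{\ast\ast}s^\ast = s^\ast$. Applying \eqref{lr3} with $t=s^\ast$, legitimate because $(s^\ast,s^\ast)\in\cS^{(2)}$ by \eqref{p1}, gives $(s^\ast s^\ast)^\ast = s^\ast s^{\ast\ast}$, which by \eqref{p1} collapses to $s^{\ast\ast} = s^\ast s^{\ast\ast}$. Since both $s^\ast$ and $s^{\ast\ast}$ are of the form $?^\ast$, \eqref{lr2} forces $s^\ast s^{\ast\ast} = s^{\ast\ast}s^\ast$, so chaining yields $s^{\ast\ast} = s^\ast s^{\ast\ast} = s^{\ast\ast}s^\ast = s^\ast$.

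For \eqref{p3}, I define a relation $\sim$ on $E$ by $e\sim f$ iff $(e,f)\in\cS^{(2)}$ and argue that (i) $\sim$ is an equivalence relation, (ii) each $\sim$-class is closed under the partial product, and (iii) within each class, the product $ef$ is the $\preceq_E$-meet of $e$ and $f$. Reflexivity comes from \eqref{p1}, symmetry from \eqref{lr2}. For transitivity, given $(e,f),(f,g)\in\cS^{(2)}$, axiom \eqref{s1} yields $(e,fg)\in\cS^{(2)}$; then \eqref{lr2} produces both $(g,f)\in\cS^{(2)}$ and $fg=gf$, so $(e,gf)\in\cS^{(2)}$; finally \eqref{s3} applied to $(g,f)$ and $(e,gf)$ produces $(e,g)\in\cS^{(2)}$. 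For the meet, a straightforward application of \eqref{s1} gives $(ef,e)\in\cS^{(2)}$, and the computation $(ef)e = e(fe) = e(ef) = (ee)f = ef$, using \eqref{lr2} and \eqref{s2}, shows $ef\preceq_E e$; symmetrically $ef\preceq_E f$. If $g\preceq_E e$ and $g\preceq_E f$, axiom \eqref{s2} applied to $(g,e)$ and $(g,f)$ gives $g = gf = (ge)f = g(ef)$, so $g\preceq_E ef$. Since $(ef,e)\in\cS^{(2)}$, we have $ef\sim e$, so each class is closed under this meet operation.

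The main obstacle I anticipate is the transitivity in \eqref{p3}: axioms \eqref{s1} and \eqref{s2} by themselves do not deliver $(e,g)\in\cS^{(2)}$ from $(e,f),(f,g)\in\cS^{(2)}$; one must first use the commutativity of idempotents \eqref{lr2} to turn $fg$ into $gf$ and then invoke \eqref{s3}. This subtlety is precisely what makes \eqref{lr2} essential for $E$ to split as a \emph{disjoint union of meet semilattices} rather than as an arbitrary poset.
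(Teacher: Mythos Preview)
Your proof is correct and follows essentially the same approach as the paper's. The only cosmetic differences are which of the associativity axioms \eqref{s1}--\eqref{s3} you invoke at each step (for transitivity of $\sim$ you use \eqref{s1} then \eqref{s3}, the paper uses \eqref{s1} then \eqref{s2}), and that you establish directly that $ef$ is the meet within a $\sim$-class, whereas the paper phrases it as ``if the meet exists then $(e,f)\in\cS^{(2)}$ and $e\wedge f = ef$''; these are equivalent formulations.
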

    \begin{proof}
        (p1): Notice initially that $(s^+,s^+) \in \cS^{(2)}$, since $(s^+,s) \in \cS^{(2)}$ and \eqref{261} holds. Now, by \eqref{lr3}, $s^+ = (s^+ s)^+  = s^+ s^+ $.
    
        (p2): Let $e \in \cS^+$. Then there exists $s \in \cS$ such that $e=s^+ $. Moreover, by \eqref{lr1}, $(e^+ ,e) \in \cS^{(2)}$ and $e^+ e = e$. Thus, applying the commutativity of $\cS^+ $, \eqref{lr3}, and \eqref{p1}, it follows that:
        \begin{align*}
            e=e^+ e=(s^+ )^+ s^+ =s^+ (s^+ )^+ = (s^+ s^+ )^+ =(s^+ )^+ =e^+ .
        \end{align*}

        (p3): If $e,f \in \cS^+$ are such that the meet $e \wedge f$ is defined, then $(e,f) \in \cS^{(2)}$ and $e \wedge f = ef$. Indeed, since \( (e,e \wedge f) \in \cS^{(2)} \) and \( (e \wedge f, f) \in \cS^{(2)} \), it follows from \eqref{s1} that \( (e(e \wedge f), f) \in \cS^{(2)} \). Moreover, by \eqref{lr2}, we obtain \( (e(e \wedge f))f = ((e \wedge f)e)f \). Applying \eqref{s2}, we conclude that \( (e,f) \in \cS^{(2)} \). Now, since \( ef \preceq_{\cS^+} e, f \), it follows that if \( j \in \cS^+ \subseteq E(\cS) \) satisfies \( j \preceq_{\cS^+} e,f \), then \( j = j^2 \preceq_{\cS^+} ef \).
    
        Now, define a relation $\sim$ in $\cS^+$ by $e \sim f$ if and only if $(e,f) \in \cS^{(2)}$. We will prove that \( \sim \) is an equivalence relation. Indeed, reflexivity follows from \eqref{p1}, and symmetry follows from \eqref{lr2}. For transitivity, assume that \( e \sim f \) and \( f \sim g \). Then \( (e,f) \in \cS^{(2)} \) and \( (f,g) \in \cS^{(2)} \), so by \eqref{s1}, we obtain \( (ef,g) \in \cS^{(2)} \). Moreover, by \eqref{lr2}, we have \( (ef)g = (fe)g \), and applying \eqref{s2}, we conclude that \( (e,g) \in \cS^{(2)} \), which means that \( e \sim g \).
    
        Since \( e \sim f \) implies that \( e \wedge f \) is defined, it follows that each \(\sim\)-class forms a meet semilattice in \( \cS^+ \). Moreover, since \(\sim\) is an equivalence relation, \( \cS^+ \) is the disjoint union of its \(\sim\)-classes.
    \end{proof}
    
    \begin{lemma} \label{lemma211}
        Assume that $\cS$ is left restriction. If $s \in S$ and $e \in \cS^+$ are such that $(e,s) \in \cS^{(2)}$ and $es = s$, then $s^+  \preceq_{\cS^+} e$. In other words, $s^+ $ is the minimum left identity for $s$ with respect to $\preceq_{\cS^+}$. 
    \end{lemma}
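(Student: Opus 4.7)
The plan is to verify the two conditions defining $s^\ast \preceq_E e$ (cf. \eqref{defordememe}), namely that $(s^\ast, e) \in \cS^{(2)}$ and $s^\ast = s^\ast e$, by manipulating the hypothesis $es=s$ with the left restriction axioms.

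First, I would establish that $(s^\ast, e) \in \cS^{(2)}$. Since $(e,s) \in \cS^{(2)}$, Remark \eqref{261} gives $(e, s^\ast) \in \cS^{(2)}$. Using $e = e^\ast$ from Lemma \ref{lema210}\eqref{p2}, this reads $(e^\ast, s^\ast) \in \cS^{(2)}$, so \eqref{lr2} yields $(s^\ast, e^\ast) = (s^\ast, e) \in \cS^{(2)}$ together with the commutativity relation $e s^\ast = s^\ast e$.

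Next, I would compute $s^\ast$ by applying the star operation to $es = s$. Since $(e,s) = (e^\ast, s) \in \cS^{(2)}$, axiom \eqref{lr3} yields
\begin{align*}
s^\ast = (es)^\ast = (e^\ast s)^\ast = e^\ast s^\ast = e s^\ast.
\end{align*}
Combining this with $es^\ast = s^\ast e$ from the previous step gives $s^\ast = s^\ast e$, completing the verification that $s^\ast \preceq_E e$.

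I do not foresee any real obstacle here; the argument is essentially a direct unwinding of axioms \eqref{lr2} and \eqref{lr3} together with Remark \eqref{261}. The only subtle point is ensuring all compositions are defined before invoking the axioms, which is exactly what Remark \eqref{261} and Lemma \ref{lema210}\eqref{p2} are for.
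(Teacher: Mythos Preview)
Your proposal is correct and follows essentially the same route as the paper: both obtain $s^\ast = (es)^\ast = (e^\ast s)^\ast = e^\ast s^\ast = es^\ast$ via \eqref{lr3} and \eqref{p2}, and then conclude $s^\ast \preceq_E e$. The only difference is that you spell out explicitly why $(s^\ast,e)\in\cS^{(2)}$ and why $es^\ast = s^\ast e$ (via \eqref{261} and \eqref{lr2}), whereas the paper absorbs this into the phrase ``by definition''.
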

    \begin{proof}
        According to \eqref{p2}, we have \( e^+  = e \). Then, applying \eqref{lr3},
        \[
        s^+  = (es)^+  = (e^+ s)^+  = e^+ s^+  = es^+ .
        \]
        By definition, this implies that \( s^+  \preceq_{\cS^+} e \).
    \end{proof}
    
    \begin{lemma} \label{lema212}
       Assume that \(\cS\) is left restriction. Then, for all \((s,t) \in \cS^{(2)}\), we have \((st)^+  = (st^+ )^+ \) and \((st)^+ = (st)^+ s^+\).
    \end{lemma}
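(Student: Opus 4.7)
The plan is to prove the two identities in the reverse of the stated order: first $(st)^\ast = (st)^\ast s^\ast$, then deduce $(st)^\ast = (st^\ast)^\ast$ from it. Both identities will follow from the restriction axioms together with Lemma \ref{lemma211} and the associativity of the semigroupoid composition, so the work is essentially bookkeeping of which pairs lie in $\cS^{(2)}$.

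For the identity $(st)^\ast = (st)^\ast s^\ast$, I would first argue that $s^\ast$ is a left identity for $st$. Since $(s^\ast, s) \in \cS^{(2)}$ by \eqref{lr1} and $(s,t) \in \cS^{(2)}$ by hypothesis, the associativity axiom \eqref{s1} guarantees that $(s^\ast s, t)$ and $(s^\ast, st)$ both lie in $\cS^{(2)}$, and $(s^\ast s)t = s^\ast(st)$. Using $s^\ast s = s$ from \eqref{lr1}, this gives $s^\ast(st) = st$. Since $s^\ast \in E = \cS^\ast$, Lemma \ref{lemma211} applied to the element $st$ and the idempotent $s^\ast$ yields $(st)^\ast \preceq_E s^\ast$, which by the definition \eqref{defordememe} of $\preceq_E$ is exactly $(st)^\ast = (st)^\ast s^\ast$.

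For the identity $(st)^\ast = (st^\ast)^\ast$, I would start from \eqref{lr4}, which says $st^\ast = (st)^\ast s$. Taking $\ast$ on both sides gives $(st^\ast)^\ast = \bigl((st)^\ast s\bigr)^\ast$. Since $(st)^\ast$ is of the form $u^\ast$ with $u = st$, and since $\bigl((st)^\ast, s\bigr) \in \cS^{(2)}$ follows from $\bigl((st)^\ast, st\bigr) \in \cS^{(2)}$ together with \eqref{s3}, axiom \eqref{lr3} applies and yields $\bigl((st)^\ast s\bigr)^\ast = (st)^\ast s^\ast$. Combining with the first identity proved above, I obtain $(st^\ast)^\ast = (st)^\ast s^\ast = (st)^\ast$.

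The main obstacle is not conceptual but bureaucratic: the partial nature of the composition forces one to verify at every step that the relevant pairs lie in $\cS^{(2)}$ before applying any of the axioms \eqref{lr3}, \eqref{lr4}, or Lemma \ref{lemma211}. In particular, the use of Lemma \ref{lemma211} demands confirming both $(s^\ast, st) \in \cS^{(2)}$ and $s^\ast(st) = st$, and the use of \eqref{lr3} demands confirming $\bigl((st)^\ast, s\bigr) \in \cS^{(2)}$. These all come out either directly from Remark \ref{rem_right_left}'s preceding observation \eqref{261}--\eqref{262} or from the strong form of associativity in Definition \ref{def:semigroupoid}, so no additional machinery is required.
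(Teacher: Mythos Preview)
Your argument is correct. The difference from the paper's proof lies in the order and in which identity is obtained directly. The paper establishes $(st)^\ast = (st^\ast)^\ast$ first by proving both inequalities $(st)^\ast \preceq_E (st^\ast)^\ast$ and $(st^\ast)^\ast \preceq_E (st)^\ast$ via two separate applications of Lemma~\ref{lemma211} (showing that $(st^\ast)^\ast$ is a left identity for $st$ and that $(st)^\ast$ is a left identity for $st^\ast$), and only then derives $(st)^\ast = (st)^\ast s^\ast$ from the chain $(st)^\ast s^\ast = ((st)^\ast s)^\ast = (st^\ast)^\ast = (st)^\ast$. You instead prove $(st)^\ast = (st)^\ast s^\ast$ directly with a single application of Lemma~\ref{lemma211} (using that $s^\ast$ is a left identity for $st$), and then run essentially the same chain in reverse to obtain $(st^\ast)^\ast = (st)^\ast$. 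Your route is a bit more economical---one invocation of Lemma~\ref{lemma211} rather than two---while the paper's approach has the minor expository advantage of treating the ``main'' identity $(st)^\ast = (st^\ast)^\ast$ as the primary target.
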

    \begin{proof}
        Consider \((s,t) \in \cS^{(2)}\). Then,  
            \[
            (st^+ )^+ (st) = (st^+ )^+ s(t^+ t) = (st^+ )^+ (st^+ )t = st^+ t = st,
            \]
        so by Lemma \ref{lemma211}, we obtain \((st)^+  \preceq_E (st^+ )^+ \). For the reverse inequality, applying \eqref{lr4}, we have 
            \[
            (st)^+ (st^+ ) = ((st)^+ s)t^+  = st^+ t^+  = st^+ ,
            \]
        which again, by Lemma \ref{lemma211}, gives \((st^+ )^+  \preceq_E (st)^+ \). On the other hand, applying \eqref{lr3}, \eqref{lr4} and the previous identity, we obtain
            $$ (st)^+ s^+ = ((st)^+ s)^+ = (st^+)^+ = (st)^+. $$
    \end{proof}
    
    \begin{prop} \label{lrcategorical}
        Every left restriction semigroupoid $\cS$ is categorical.
    \end{prop}
    \begin{proof}
      Let \( s \in \cS \). By \eqref{261}, we have \( \cS_s = \cS_{s^+ } \). Now, consider the relation \(\sim\) on \( \cS^+ \) defined by \( e \sim f \) if and only if \( (e, f) \in \cS^{(2)} \). In the proof of \eqref{p3}, we established that \(\sim\) is an equivalence relation. We denote by \( \tilde{e} \) the equivalence class of {\(e \in S^+\)} under \(\sim\).  
    
        \noindent\begin{enumerate}[left=0pt]\Not{Claim}
            \item $\cS_{s^+ } = \cS_e$, for all $e \in \tilde{s^+ }$.
            
            Indeed, let \( e \in \tilde{s^+ } \) and \( u \in \cS_{s^+ } \). Then, we have \( (s^+ , e), (u, s^+ ) \in \cS^{(2)} \). By \eqref{s3}, it follows that \( (u, s^+  e) \in \cS^{(2)} \), and by \eqref{lr2}, we obtain \( u(s^+  e) = u(e s^+ ) \). Now, since \( (e, s^+ ) \in \cS^{(2)} \) and \( (u, e s^+ ) \in \cS^{(2)} \), applying \eqref{s3} again gives \( (u, e) \in \cS^{(2)} \), which means that \( u \in \cS_e \). The reverse inclusion follows by symmetry.  \label{claim1}
            
            \item If \( e, f \in \cS^+ \) are such that \( e \not\sim f \), then \( \cS_e \cap \cS_f = \emptyset \). 
            
            Indeed, suppose that $e \not\sim f$. If there exists \( u \in \cS_e \cap \cS_f \), then \( (u, e) \in \cS^{(2)} \) and \( (u, f) \in \cS^{(2)} \). {Since $S_u = S_{u^+}$, we have that $(u^+,e) \in \cS^{(2)}$ and $(u^+,f) \in \cS^{(2)}$. But then $u^+ \in \tilde{e} \cap \tilde{f}$, contradicting $e \not\sim f$.} Therefore, \( \cS_e \cap \cS_f = \emptyset \).  \label{claim2}
        \end{enumerate}
        Let \( s, t \in \cS \). We must show that \( \cS_s \) and \( \cS_t \) are either equal or disjoint. Since \( \cS_s = \cS_{s^+ } \) and \( \cS_t = \cS_{t^+ } \), this follows from the fact that either \( s^+  \sim t^+  \) or \( s^+  \not\sim t^+  \), concluding the proof.  
    \end{proof}
    
    Therefore, we conclude by Theorem \ref{cord24} that every left restriction semigroupoid is a graphed semigroupoid. For the remaining of this subsection, assume that $\cS$ is a graphed semigroupoid with domain and range functions $D,R \colon \cS_1 \to \cS_0$.
    
    Now, we will give another equivalent definition of left restriction semigroupoids that involves semigroupoid congruences.

    \begin{defi}
        {A \textit{congruence} in a semigroupoid $\cS$ is an equivalence relation $\rho$ such that, for $s,t,u,v \in \cS$, if $s \rho t$, $u \rho v$ and $(s,u), (t,v) \in \cS^{(2)}$, then $(su) \rho (tv)$.}
    \end{defi}
    
    Our next characterization requires a more general notion of congruence, namely, left and right congruences.

    \begin{defi}
        {A \textit{left congruence} in a semigroupoid $\cS$ is an equivalence relation $\rho$ such that, for $s,t,r \in \cS$, if $s \rho t$ and $(r,s), (r,t) \in \cS^{(2)}$, then $(rs) \rho (rt)$. A \textit{right congruence} in $\cS$ is defined dually.}
    \end{defi}
    
    Let $E$ be a distinct set of $\cS$ and, {for each element $s \in \cS$, consider the set
        $$ R_E(s) = \{ e \in E \colon (s,e) \in \cS^{(2)}, se = s \}. $$
    That is, $R_E(s)$ is the set of right identities of $s$ in $E$. Then,} we define the relation:
    \begin{gather}\label{rel_right}
        {s \tilde{\mathcal{R}}_E t \iff R_E(s) = R_E(t).}
    \end{gather}
    Then $\tilde{\mathcal{R}}_E$ is an equivalence relation. Similarly, {let $L_E(s) = \{ e \in E \colon (e,s) \in \cS^{(2)}, es = s\}$ be the set of left identities of $s$ in $E$ and define the relation}:
    \begin{gather}\label{rel_left}
        {s \tilde{\mathcal{L}}_E t \iff L_E(s) = L_E(t).}
    \end{gather} 
    We have that $\tilde{\mathcal{L}}_E$ is an equivalence relation.

    \begin{prop}
        {Let $\cS$ be left restriction. Then $\cS^+$ is a distinct set, $\tilde{\mathcal{L}}_{\cS^+}$ is a left congruence and every $s \in \cS$ is $\tilde{\mathcal{L}}_{\cS^+}$ related with a unique idempotent in $\cS^+$, namely $s^+$. In particular, we have $s \tilde{\mathcal{L}}_{\cS^+} t$ if and only if $s^+ = t^+$.}

        \begin{proof}
            {We have that $\cS^+$ is a distinct set by Lemma \ref{lema210} and \eqref{lr2}. It follows from \eqref{claim1} that $s \tilde{\mathcal{L}}_{\cS^+} s^+$. Since $\tilde{\mathcal{L}}_{\cS^+}$ is an equivalence relation and $s^+ \in L_{\cS^+}(s^+)$, we obtain that:
                $$ s \tilde{\mathcal{L}}_{\cS^+} t \iff s^+ \tilde{\mathcal{L}}_{\cS^+} t^+ \iff s^+ = t^+s^+ \overset{\eqref{lr2}}{=} s^+t^+ = t^+. $$
            In particular, if $e \in \cS^+$ is such that $s \tilde{\mathcal{L}}_{\cS^+} e$, then $s^+ = e^+ = e$ by Lemma \ref{lema210}. Therefore, $s^+$ is the unique idempotent in $\cS^+$ related to $s$. Suppose that $s,t,r \in \cS$ are such that $s \tilde{\mathcal{L}}_{\cS^+} t$ and $(r,s), (r,t) \in \cS^{(2)}$. Then:
                $$ (rs)^+ \overset{\eqref{lema212}}{=} (rs^+)^+ = (rt^+)^+ \overset{\eqref{lema212}}{=} (rt)^+. $$
            From the previous arguments, this shows that $(rs) \tilde{\mathcal{L}}_{\cS^+} (rt)$. Hence, $\tilde{\mathcal{L}}_{\cS^+}$ is a left congruence, concluding the proof.}
        \end{proof}
    \end{prop}

    \begin{prop}
        {Let $E$ be a distinct set of $S$ such that:
        \begin{enumerate} \Not{R}
            \item every $s \in \cS$ is $\tilde{\mathcal{L}}_E$ related with a unique idempotent in $E$. Denote it by $s^+$. \label{r1}
            \item $\tilde{\mathcal{L}}_E$ is a left congruence. \label{r2}
            \item for all $s \in \cS$ and all $e \in E$ such that $(s,e) \in \cS^{(2)}$, we have that $((se)^+,s) \in \cS^{(2)}$ and $se = (se)^+s$. \label{r3}
        \end{enumerate}
        Then $\cS$ is left restriction with $\cS^+ = E$.}

        \begin{proof}
            {We need to verify condition \eqref{lr1} - \eqref{lr4}. We already have that \eqref{lr2} holds by definition of distinct set. Condition \eqref{lr1} follows the definition of $\tilde{\mathcal{L}}_E$ and $s \tilde{\mathcal{L}}_E s^+$, that is, $(s^+,s) \in \cS^{(2)}$ and $s^+s = s$. To verify \eqref{lr3} we note that $(t^+,t) \in \cS^{(2)}$ and $t^+t = t$ implies $(s,t) \in \cS^{(2)}$ if and only if $(s,t^+) \in \cS^{(2)}$. Therefore, since $\tilde{\mathcal{L}}_E$ is a left congruence and $t \tilde{\mathcal{L}}_E t^+$, it follows that if $(s,t) \in \cS^{(2)}$, then $(st) \tilde{\mathcal{L}}_E (st^+)$. That is, $(st)^+ = (st^+)^+$. Using \eqref{lr1} and \eqref{lr2}, we obtain:
                $$ (s^+t^+)(s^+t) = s^+t^+s^+t = s^+s^+t^+t = s^+t^+t = s^+t. $$
            Furthermore, since $(s^+t)^+ = (s^+t^+)^+ = s^+t^+$, it follows that:
                $$ (s^+t)^+ s^+t^+ = (s^+t^+)^+ s^+t^+ = s^+t^+. $$
            From this, we conclude that \(s^+ t \tilde{\mathcal{L}}_E s^+ t^+ \). Applying \eqref{r1}, we obtain \((s^+ t)^+  = (s^+ t^+ )^+  = s^+ t^+ \). Finally, if $(s,t) \in \cS^{(2)}$, then:
                $$ st^+ \overset{(3)}{=} (st^+)^+ s \overset{(2)}{=} (st)^+ s. $$
            Thus, condition \eqref{lr4} holds, concluding that $\cS$ is left restriction. From (1) we also obtain that $e^+ = e$, for all $e \in E$. Hence, it must be $\cS^+ = E$.}
        \end{proof}
    \end{prop}

The definition of a left restriction semigroupoid extends the concepts of both left restriction semigroups \cite[Definition 2.1]{gould2009partial} and left restriction categories \cite[Definition 2.1.1]{cockett2002restriction}. It is worth noting that we have chosen the term ``left restriction" rather than ``weakly left \( E \)-ample" because, as shown in \cite[Theorem 4.13]{hollings2009PP}, weakly left \( E \)-ample semigroups are precisely left restriction semigroups. 

    \section{A representation theorem} The Wagner-Preston Theorem allows us to identify any inverse semigroup with a subsemigroup of an inverse monoid of partial bijections of a set. In this section, we prove that every restriction semigroupoid can be viewed as a restriction subsemigroupoid of a restriction category consisting of partial maps between the fibers of a function. Throughout this section, $\cS$ and $\mathcal{T}$ will denote semigroupoids.\\

    We begin by recalling the definition of the semigroup of partial maps of a set. Let $X$ be a set. A \textit{partial map} of $X$ is a (surjective) function $f \colon {_fX} \to X_f$, where ${_fX}, X_f \subseteq X$. Let $\mathcal{PT}(X)$ be the family of all partial maps of $X$. Then $\mathcal{PT}(X)$ becomes a monoid with composition
        $$ f \star g \colon g^{-1}(X_g \cap {_fX}) \mapsto f(X_g \cap {_fX}), \ (f \star g)(x) = f(g(x)). $$
    Furthermore, $\mathcal{PT}(X)$ has a right restriction structure, given by $f^\ast = id_{_fX}$.  The triple $(\mathcal{PT}(X),\star,\ast)$ is the right restriction monoid of partial maps of $X$. It is easy to see that $\mathcal{PT}(X)^{op} := (\mathcal{PT}(X),\bullet,+)$, where
    \begin{align} \label{PTXop}
        f \bullet g = g \star f \quad\text{and}\quad f^+ = f^\ast,
    \end{align}
    is a left restriction monoid.

    \begin{defi}
        Let $\pi \colon X \to Y$ be a surjective function. A \textit{partial map between fibers of $\pi$} is a (surjective) function $f \colon {_fX} \to X_f$ with ${_fX} \subseteq \pi^{-1}(y)$ and $X_f \subseteq \pi^{-1}(x)$.
    \end{defi}

    For the rest of this section, $\pi$ will denote a surjective function $\pi \colon X \to Y$. Notice that the family of partial maps between fibers of $\pi$ is a subset of $\mathcal{PT}(X)$.

    \begin{prop}
        {Consider the following data:}
        \begin{itemize}
            \item {$\mathcal{PT}(\pi)_0 = Y$;}
            \item {$\mathcal{PT}(\pi)_1 = \{ (y,f,x) \colon f \text{ is a partial map } \pi^{-1}(x) \to \pi^{-1}(y) \}$;}
            \item {For each $(y,f,x) \in \mathcal{PT}(\pi)_1$, define $R(y,f,x) = y$ and $D(y,f,x) = x$;}
            \item {For each pair $(z,g,y), (y,f,x) \in \mathcal{PT}(\pi)_1$, define $(z,g,y) \circ (y,f,x) = (z,g \star f, x)$, where $g \star f$ denotes the composition in the monoid of partial maps $\mathcal{PT}(X)$;}
            \item {For each $y \in Y$, define $1_y = (y,id_{\pi^{-1}(y)},y)$.}
        \end{itemize}
        {Then $\mathcal{PT}(\pi) := (\mathcal{PT}(\pi)_0, \mathcal{PT}(\pi)_1, D, R, \circ)$ is a category whose objects are given by $\mathcal{PT}(\pi)_0$, the morphisms are given by $\mathcal{PT}(\pi)_1$, the range and the domain are given, respectively, by $R$ and $D$, the composition between two morphisms is defined as above, and, for each $y \in \mathcal{PT}(\pi)_0$, the identity is given by $1_y$.}

        \begin{proof}
            {
            We first verify that the composition $\circ$ is well defined. Let $(z,g,y), (y,f,x) \in \mathcal{PT}(\pi)_1$. Then
                $$ dom(g \star f) = f^{-1}(X_f \cap {_gX}) \subseteq {_fX} \subseteq \pi^{-1}(x), $$
            and
                $$ im(g \star f) = g(X_f \cap {_gX}) \subseteq X_g \subseteq \pi^{-1}(z). $$
            That is, $g \star f$ is a partial function $\pi^{-1}(x) \to \pi^{-1}(z)$ and, hence, $(z,g \star f,x) \in \mathcal{PT}(\pi)_1$. We also have that the composition is compatible with the range and domain functions, since
                $$ R((z,g,y) \circ (y,f,x)) = R(z,g \star f,x) = z = R(z,g,y), $$
            and
                $$ D((z,g,y) \circ (y,f,x)) = D(z,g \star f,x) = x = D(y,f,x). $$
            Now we check that the composition is associative. Let $(w,h,z) \in \mathcal{PT}(\pi)_1$. Then
            \begin{align*}
                (w,h,z) \circ [(z,g,y) \circ (y,f,x)] &= (w,h \star (g \star f),x) \\
                &= (w,(h \star g) \star f,x) \\
                &= [(w,h,z) \circ (z,g,y)] \circ (y,f,x),
            \end{align*}
            where the second equality follows from the associativity of the monoid $(\mathcal{PT}(X),\star)$. Now we prove that, for each $y \in Y$, $1_y = (y,id_{\pi^{-1}(y)},y)$ is, in fact, an identity. Let $(y,f,x) \in \mathcal{PT}(\pi)_1$. Then $f \star id_{\pi^{-1}(x)} = f$ since $dom(f) \subseteq \pi^{-1}(x)$, and $id_{\pi^{-1}(y)} \star f = f$ since $im(f) \subseteq \pi^{-1}(y)$. Consequently, we obtain
                $$ (y,f,x) = (y, id_{\pi^{-1}(y)} \star f, x) = (y,id_{\pi^{-1}(y)},y) \circ (y,f,x), $$
            and
                $$ (y,f,x) = (y, f \star id_{\pi^{-1}(x)}, x) = (y,f,x) \circ (x, id_{\pi^{-1}(x)},x). $$
            This concludes that $\mathcal{PT}(\pi) = (\mathcal{PT}(\pi)_0, \mathcal{PT}(\pi)_1, D,R, \circ)$ is a category.}
        \end{proof}
    \end{prop}

    \begin{defi}
        The category {$\mathcal{PT}(\pi)$} will be referred to as the \textit{category of {partial} maps between fibers of $\pi$}.
    \end{defi}

    The next example allows us to obtain the monoid of partial maps $\mathcal{PT}(X)$ as a particular case of the category of partial maps between fibers of $\pi$.

    \begin{exe} \label{exe:PTpi-semigroup}
        Let $X$ be a set and $\pi \colon X \to \{1\}$ be the trivial function. Then {$\mathcal{PT}(\pi)$} is isomorphic to {$\mathcal{PT}(X)$} as monoid. Indeed, since $\pi^{-1}(1) = X$, we have 
            $$ \mathcal{PT}(\pi)_1 = \{ (1,f,1) \colon f \text{ is a partial map } X \to X \}, $$
        and the composition in {$\mathcal{PT}(\pi)$} is given by {$(1,g,1) \circ (1,f,1) = (1,g \star f,1)$}. Therefore, $(1,f,1) \mapsto f$ defines an isomorphism {$\mathcal{PT}(\pi) \to \mathcal{PT}(X)$}.
    \end{exe}

    The {right} restriction structure on {$\mathcal{PT}(X)$} induces a {right} restriction structure on {$\mathcal{PT}(\pi)$}.

    \begin{prop}
        {The category $\mathcal{PT}(\pi)$ of partial maps between fibers of $\pi$ has a right restriction structure, given by $(y,f,x)^\ast = (x,id_{_fX},x)$, for every $(y,f,x) \in \mathcal{PT}(\pi)_1$.}

        \begin{proof}
            {Since $\mathcal{PT}(\pi)$ is a category, it is enough to verify the following conditions:
            \begin{itemize}
                \item[\eqref{rr1}] $(y,f,x) \circ (x,id_{_fX},x) = (y,f,x)$;
                \item[\eqref{rr2}] $(y,id_B,y) \circ (y,id_A,y) = (y,id_B,y) \circ (y,id_A,y)$, for every $A,B \subseteq \pi^{-1}(y)$;
                \item[\eqref{rr3}] $((y,f,x) \circ (x,id_A,x))^\ast = (x,id_{_fX},x) \circ (x,id_A,x)$ for every $A \subseteq \pi^{-1}(y)$;
                \item[\eqref{rr4}] $(y,id_{_gX},y) \circ (y,f,x) = (y,f,x) \circ ((z,g,y) \circ (y,f,x))^\ast$.
            \end{itemize}
            Conditions \eqref{rr1} and \eqref{rr2} follows from $f \star id_{_fX} = f$ and $id_A \star id_B = id_{A \cap B} = id_B \star id_A$, respectively. Since $dom(f \star 1_A) = {_fX} \cap A$, we have
            \begin{align*}
                (y,f,x) \circ (x,id_A,x))^\ast &= (y,f \star id_A,x)^\ast = (x,id_{{_fX} \cap A},x) \\
                &= (x,id_{_fX} \star id_A,x) \\
                &= (x, id_{_fX},x) \circ (x,id_A,x).
            \end{align*}
            Therefore, condition \eqref{rr3} is satisfied. Let $f,g \in \mathcal{PT}(X)$ and $A = dom(g \star f) = f^{-1}(X_f \cap {_gX})$. Since $(\mathcal{PT}(X),\star,\ast)$ is a right restriction monoid, it follows from \eqref{rr4} that
                $$ id_{_gX} \star f = (g \star f)^\ast \star f = id_A \star f. $$
            Thus,
            \begin{align*}
                (y,id_{_gX},y) \circ (y,f,x) &= (y,id_{_gX} \star f,x) \\
                &= (y,id_A \star f,x) \\
                &= (y,id_A,y) \circ (y,f,x) \\
                &= (z,g \star f,x)^\ast \circ (y,f,x) \\
                &= ((z,g,y) \circ (y,f,x))^\ast \circ (y,f,x).
            \end{align*}
            This proves \eqref{rr4}. Hence, $(\mathcal{PT}(\pi),\ast)$ is a right restriction category.}
        \end{proof}
    \end{prop}

    \begin{obs}
        In Section 3 of \cite{cockett2002restriction}, Cockett and Lack introduced the category of partial maps $Par(\mathcal{C},\mathcal{M})$, which is a restriction category. Observe that $\mathcal{PT}(\pi)$ can be identified with the full subcategory of $Par(Set,\mathcal{M})$ whose objects are precisely the sets $\pi^{-1}(y)$, for $y \in Y$, where $Set$ denotes the category of sets and $\mathcal{M}$ is the class of all injective functions. The restriction structure on $\mathcal{PT}(\pi)$ is inherited from that of $Par(Set,\mathcal{M})$.

        Although $\mathcal{PT}(\pi)$ is already known as a restriction subcategory of $Par(Set,\mathcal{M})$, we choose to present its construction with complete proofs, as this elementary approach differs from that of Cockett and Lack.
    \end{obs}

    Before we present the representation theorem for left restriction semigroupoids, we need to stablish what we mean with \emph{``seen as a restriction subsemigroupoid''}.

    \begin{defi}
        A function $f \colon \cS \to \mathcal{T}$ is called:
        \begin{itemize}
            \item a \textit{morphism} if $(s,t) \in \cS^{(2)}$ implies $(f(s),f(t)) \in \mathcal{T}^{(2)}$ and $f(st) = f(s)f(t)$.
            \item a \textit{rigid morphism} if it is a morphism and $(f(s),f(t)) \in \mathcal{T}^{(2)}$ implies $(s,t) \in \cS^{(2)}$.
            \item an \textit{embedding} if it is an injective rigid morphism.
            \item an \textit{isomorphism} if it is a bijective rigid morphism.
        \end{itemize}
        If, in addition, $(\cS,+)$ and $(\mathcal{T},+)$ are restriction semigroupoids, then $f$ is called a \emph{restriction morphism} (respectively, \emph{restriction rigid morphism}, \emph{restriction embedding}, or \emph{restriction isomorphism}) if it is a morphism (respectively, rigid morphism, embedding, or isomorphism) and satisfies $f(s^+) = f(s)^+$, for all $s \in \cS$.
    \end{defi}

    \begin{defi}
        We say that $\cS$ and $\mathcal{T}$ are \textit{isomorphic} if and only if there exist {an isomorphism $f \colon \cS \to \mathcal{T}$}. If $\cS$ and $\mathcal{T}$ are left restriction semigroupoids, we say that $\cS$ and $\mathcal{T}$ are isomorphic \textit{as left restriction semigroupoids} if they are isomorphic with $f \colon \cS \to \mathcal{T}$ {a restriction isomorphism}.
    \end{defi}

    {Note that, if $\mathcal{C} = (\mathcal{C}_0,\mathcal{C}_1,D,R,\circ)$ is a category, then $\mathcal{C}^{op} = (\mathcal{C}_0,\mathcal{C}_1,R,D,\bullet)$ is also a category, where composition is defined as $f \bullet g := g \circ f$. Furthermore, $(\mathcal{C},\ast)$ is right restriction if and only if $(\mathcal{C}^{op},+)$ is left restriction, where $f^+ = f^\ast$. In the following we prove that, for every left restriction semigroupoids $\cS$, there exists a restriction embedding $\cS \to \mathcal{PT}(\pi)^{op}$, for some function $\pi$. Thus, every left restriction semigroupoid is isomorphic to, or ``can be seen as'', a restriction subsemigroupoid of some left restriction category $\mathcal{PT}(\pi)^{op}$.}

    \begin{theorem} \label{thm:Wagner-Preston-semigroupoid}
        Let $\cS$ be a left restriction semigroupoid. Then there is a restriction embedding $\cS \to \mathcal{PT}^{op}(\pi)$, for some function $\pi \colon X \to Y$.

        \begin{proof}
            Since every left restriction semigroupoid is categorical by Proposition \ref{lrcategorical}, we can find range and domain functions $R,D \colon \cS \to \cS^{(0)}$ compatible with the composition of $\cS$. We notice that $(R,D,\cS^{(0)})$ may not be uniquely determined. For each $s \in \cS$, define sets
                $$ {_sX} = \{ t \in \cS \colon (t,s^+) \in \cS^{(2)}, ts^+ = t \}, $$
            and $X_s = \{ ts \colon t \in {_sX} \}$, and a map $\alpha_s \colon {_sX} \to X_s$, given by $\alpha_s(t) = ts$, for every $t \in {_sX}$. The map $\alpha_s$ is well defined since, from \eqref{lr1}, we obtain that $(t,s^+) \in \cS^{(2)}$ if and only if $(t,s) \in \cS^{(2)}$. Furthermore, we have that
                $$ t \in {_sX} \implies (t,s) \in \cS^{(2)} \iff D(t) = R(s) \iff t \in D^{-1}(R(s)), $$
            and if $(t,s) \in \cS^{(2)}$, then $D(ts) = D(s)$. Therefore, $\alpha_s(t) \in D^{-1}(D(s))$, that is, $\alpha_s$ is a partial map $D^{-1}(R(s)) \to D^{-1}(D(s))$. We can thus define a function $\alpha \colon \cS \to \mathcal{PT}(D)^{op}$ by $s \mapsto (D(s),\alpha_s,R(s))$. In what follows, we show that $\alpha$ is a restriction embedding.

            To prove that $\alpha$ is a rigid morphism, notice that, {in the opposite category $\mathcal{PT}(\pi)^{op}$, we have $D(y,f,x) = y$ and $R(y,f,x) = x$. Hence,}
            \begin{align*}
                (s,t) \in \cS^{(2)} &\iff D(s) = R(t) \\
                &\iff D(D(s),\alpha_s,R(s)) = R(D(t),\alpha_t,R(t)) \\
                &\iff (\alpha(s),\alpha(t)) \in (\mathcal{PT}(D)^{op})^{(2)}.
            \end{align*}
            Let $(s,t) \in \cS^{(2)}$. Then
                $$ \alpha(s) \bullet \alpha(t) = \alpha(t) \circ \alpha(s) = (D(t),\alpha_t \star \alpha_s,R(s)) \quad\text{and}\quad \alpha(st) = (D(st),\alpha_{st},R(st)). $$
            Since the range and domain functions are compatible with the composition in $\cS$, we have $D(st) = D(t)$ and $R(st) = R(s)$. On the other hand, note that
                $$ dom(\alpha_{st}) = {_{st}X} = \{ u \in \cS \colon (u,(st)^+) \in \cS^{(2)}, u(st)^+ = u \}, $$
            and
                $$ dom(\alpha_t \star \alpha_s) = \alpha_s^{-1}(X_s \cap {_tX}) = \{ u \in \cS \colon (u,s^+), (us,t^+) \in \cS^{(2)}, us^+ = u, ust^+ = us \}. $$
            Let $u \in dom(\alpha_{st})$. From Lemma \ref{lema212} and \eqref{lr2} we obtain $(st)^+ = (st)^+ s^+ = s^+ (st)^+$. Therefore, $(u,(st)^+) \in \cS^{(2)}$ implies $(u,s^+) \in \cS^{(2)}$, which is equivalent to $(u,s) \in \cS^{(2)}$. Since $(s,t) \in \cS^{(2)}$, it follows that $(us,t) \in \cS^{(2)}$, or equivalently, $(us,t^+) \in \cS^{(2)}$. Thereby, we have
            \begin{align*}
                us^+ &= u(st)^+ s^+ & (u \in {_{st}X}) \\
                &= u(st)^+ = u, & \eqref{lema212}
            \end{align*}
            and
            \begin{align*}
                ust^+ &= u(st)^+ s & \eqref{lr4} \\
                &= us. & (u \in {_{st}X})
            \end{align*}
            This proves $dom(\alpha_{st}) \subseteq dom(\alpha_t \star \alpha_s)$. Let {$u \in dom(\alpha_t \star \alpha_s)$}. Since $(u,s^+),(us,t^+) \in \cS^{(2)}$, it follows that $(u,st^+) = (u,(st)^+ s) \in \cS^{(2)}$ and, hence, $(u,(st)^+) \in \cS^{(2)}$. Therefore,
            \begin{align*}
                u(st)^+ &= (ust)^+ u & \eqref{lr4} \\
                &= (ust^+)^+ u & \eqref{lema212} \\
                &= (us)^+ u & (us \in {_tX}) \\
                &= (us^+)^+ u & \eqref{lema212} \\
                &= u^+ u & (u \in {_sX}) \\
                &= u. & \eqref{lr1}
            \end{align*}
            This shows that $dom(\alpha_{st}) = dom(\alpha_t \star \alpha_s)$. Let $u \in dom(\alpha_{st})$. Then
                $$ (\alpha_t \star \alpha_s)(u) = (us)t = u(st) = \alpha_{st}(u). $$
            This concludes that $\alpha(s) \bullet \alpha(t) = \alpha(st)$ and, hence, $\alpha$ is a rigid morphism.
            
            To see that $\alpha$ is a restriction morphism, consider $s \in \cS$ and observe that
                $$ {_{s^+}X} = \{t \in \cS \colon (t,s^+) \in \cS^{(2)}, ts^+ = t\} = {_sX}. $$
            In particular, we obtain that $\alpha_{s^+} = id_{_sX}$. Furthermore, from $(s^+,s),(s^+,s^+) \in \cS^{(2)}$, we have that $R(s^+) = D(s^+) = R(s)$ and, hence,
                $$ \alpha(s^+) = (D(s^+),\alpha_{s^+},R(s^+)) = (R(s),id_{_sX},R(s)) = \alpha(s)^+. $$
            
            Next we verify that $\alpha$ is injective. Suppose that $\alpha(s) = \alpha(t)$. Since $(s^+,s) \in \cS^{(2)}$ and $s^+ s^+ = s^+$, we have $s^+ \in {_sX}$. Therefore, from \eqref{lr1} and $\alpha_s = \alpha_t$, we obtain
                $$ s = s^+ s = \alpha_s(s^+) = \alpha_t(s^+) = s^+ t. $$
            Analogously, we prove that $t^+ \in {_tX}$ and, hence, $t = t^+ s$. By \eqref{lr1} and \eqref{lr2}, it follows that
                $$ s = s^+ t = s^+ t^+ t = t^+ s^+ t = t^+ s = t. $$
            This concludes that $\alpha \colon \cS \to \mathcal{PT}^{op}(D)$ is an injective rigid morphism that preserves the restriction structure, hence, a restriction embedding.
        \end{proof}
    \end{theorem}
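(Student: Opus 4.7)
The plan is to build a Cayley-style representation in which each $s \in \cS$ acts by right multiplication on the elements composable with it. Since $\cS$ is left restriction, Proposition \ref{lrcategorical} together with Theorem \ref{cord24} provides range and domain functions $R, D \colon \cS \to \cS^{(0)}$ compatible with the composition; I would take $\pi := D$, so that the fibers of $\pi$ are the sets $D^{-1}(x)$ of elements with fixed domain. For each $s \in \cS$, I would then set
$${}_sX := \{\, t \in \cS : (t,s^\ast) \in \cS^{(2)}, \ ts^\ast = t \,\}, \qquad \alpha_s(t) := ts.$$
Using \eqref{261}, composability of $t$ with $s^\ast$ is the same as composability of $t$ with $s$, so ${}_sX \subseteq D^{-1}(R(s))$; compatibility of $D$ with composition then yields $\alpha_s({}_sX) \subseteq D^{-1}(D(s))$. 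Hence $\alpha_s$ is a partial map between fibers of $D$, and we obtain the candidate
$$\alpha \colon \cS \to \mathcal{PT}^{op}(D), \qquad s \longmapsto (D(s), \alpha_s, R(s)).$$

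It then remains to check that $\alpha$ is a restriction embedding. \emph{Rigidity} is immediate, since composability in both $\cS$ and $\mathcal{PT}^{op}(D)$ is governed by the equation $D(s) = R(t)$. \emph{Preservation of $^\ast$} follows from the identity ${}_{s^\ast}X = {}_sX$ together with $\alpha_{s^\ast} = 1_{{}_sX}$, which gives $\alpha(s^\ast) = (R(s), 1_{{}_sX}, R(s)) = \alpha(s)^\ast$. \emph{Injectivity}: since $s^\ast \in {}_sX$ by \eqref{lr1} and Lemma \ref{lema210}, evaluating $\alpha_s = \alpha_t$ at $s^\ast$ yields $s = s^\ast t$; by symmetry $t = t^\ast s$, and the commutativity of restrictions in \eqref{lr2} then forces $s = t$.

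The main obstacle is the \emph{homomorphism} property $\alpha_{st} = \alpha_t \star \alpha_s$ whenever $(s,t) \in \cS^{(2)}$. Pointwise equality is trivial by associativity, $(\alpha_t \star \alpha_s)(u) = (us)t = u(st) = \alpha_{st}(u)$, but showing that the two partial maps share the same domain
\begin{align*}
\mathrm{dom}(\alpha_{st}) &= \{u : (u,(st)^\ast) \in \cS^{(2)}, \ u(st)^\ast = u\}, \\
\mathrm{dom}(\alpha_t \star \alpha_s) &= \{u : (u,s^\ast), (us,t^\ast) \in \cS^{(2)},\ us^\ast = u,\ ust^\ast = us\}
\end{align*}
is the delicate step. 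The strategy is to invoke Lemma \ref{lema212}, whose identities $(st)^\ast = (st^\ast)^\ast$ and $(st)^\ast = (st)^\ast s^\ast$ let me factor $(st)^\ast$ through $s^\ast$, together with axiom \eqref{lr4} to rewrite $u(st)^\ast$ as $ust^\ast$ and vice versa. With these substitutions in hand, both inclusions reduce to bookkeeping inside the idempotent subsemigroupoid $E = \cS^\ast$, and the homomorphism property --- and with it the theorem --- drops out.
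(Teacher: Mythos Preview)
Your proposal is correct and follows essentially the same route as the paper's proof: the same Cayley-type action $\alpha_s(t)=ts$ on the sets ${}_sX$, the same map $\alpha(s)=(D(s),\alpha_s,R(s))$ into $\mathcal{PT}^{op}(D)$, and the same verification strategy for rigidity, $^\ast$-preservation, injectivity, and the domain equality via Lemma~\ref{lema212} and \eqref{lr4}. The only difference is that the paper spells out the two inclusions $\mathrm{dom}(\alpha_{st})\subseteq\mathrm{dom}(\alpha_t\star\alpha_s)$ and conversely in full detail, whereas you summarise them.
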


    Now, we present \cite[Theorem 2.32]{cordeiro2023etale} as a corollary of Theorem \ref{thm:Wagner-Preston-semigroupoid}. We denote by $\mathcal{J}(\pi)$ the subcategory of $\mathcal{PT}(\pi)$ whose morphisms are the triples $(y,f,x)$, where $f \colon {_fX} \to X_f$ is a bijection. In this case, $\mathcal{J}(\pi)$ forms an inverse category with $(y,f,x)^{-1} = (x,f^{-1},y)$.

    \begin{corollary} \cite[Theorem 2.32]{cordeiro2023etale} \label{coro:Wagner-Preston-ISGD}
        Let $\cS$ be an inverse semigroupoid. Then there is an inverse preserving embedding $\cS \to \mathcal{J}(\pi)$, for some function $\pi \colon X \to Y$.

        \begin{proof}
            Every inverse semigroupoid is a {right} restriction semigroupoid with $s^\ast = s^{-1}s$. Therefore, $\cS^{op}$ is left restriction with $s^+ = ss^{-1}$. Now it follows from Theorem \ref{thm:Wagner-Preston-semigroupoid} that there is an embedding $\alpha \colon \cS^{op} \to \mathcal{PT}(D)^{op}$, or equivalently, $\alpha \colon \cS \to \mathcal{PT}(D)$. We will prove that $im(\alpha) \subseteq \mathcal{J}(D)_1$, that is, $\alpha_s$ is a bijection for every $s \in \cS$. {Since $\alpha_s$ is chosen to be surjective, we only need to prove that $\alpha_s$ is injective. In fact, suppose that $u,v \in {_sX}$ are such that $\alpha_s(u) = \alpha_s(v)$. Then
                $$ u = uss^{-1} = \alpha_s(u)s^{-1} = \alpha_s(v)s^{-1} = vss^{-1} = v, $$
            where the first and last equalities follows from $u,v \in {_sX}$. Hence, $\alpha_s$ is bijective.}

            We notice that $\alpha \colon \cS \to \mathcal{J}(\pi)$ preserves the inverse operation. Indeed, for every $s \in \cS$ we have
                $$ \alpha(s)\alpha(s^{-1})\alpha(s) = \alpha(ss^{-1}s) = \alpha(s), $$
            and analogously $\alpha(s^{-1})\alpha(s)\alpha(s^{-1}) = \alpha(s^{-1})$. Since $\mathcal{J}(\pi)$ is an inverse category, the element satisfying these identities is unique. Therefore, it must be $\alpha(s^{-1}) = \alpha(s)^{-1}$.
        \end{proof}
    \end{corollary}

    Let $X$ be a set. The monoid of partial bijections of $X$ is the subsemigroup $\mathcal{J}(X)$ of $\mathcal{PT}(X)$ whose elements are bijections $f \colon {_fX} \to X_f$. Based on the previous discussion, we can identify $\mathcal{J}(X) \simeq \mathcal{J}(\pi)$ where $\pi \colon X \to \{1\}$ is the trivial function. By combining Corollary \ref{coro:Wagner-Preston-ISGD} with Example \ref{exe:PTpi-semigroup}, we recover the Wagner–Preston Theorem for inverse semigroups.

    \begin{corollary} \cite[Theorem 6]{wagner1952semigroups} \cite[Theorem 1]{preston1954semigroups}
        Let $S$ be an inverse semigroup. Then there is an inverse preserving embedding $S \to \mathcal{J}(X)$, for some set $X$.

        \begin{proof}
            Since $S$ is a semigroup, the range and domain functions are uniquely determined as $D,R \colon S \to \{1\}$. Since every inverse semigroup is, in particular, an inverse semigroupoid, it follows from Theorem \ref{coro:Wagner-Preston-ISGD} that there is a restriction embedding $S \to \mathcal{J}(D) \simeq \mathcal{J}(S)$.
        \end{proof}
    \end{corollary}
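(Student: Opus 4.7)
The plan is to deduce this classical Wagner–Preston theorem as a direct specialization of Corollary \ref{coro:Wagner-Preston-ISGD}, viewing the inverse semigroup $S$ as a one-object inverse semigroupoid. Since the composition in $S$ is globally defined, any compatible choice of range and domain functions $R,D \colon S \to S^{(0)}$ must land in a singleton; concretely, $S^{(0)} = \{1\}$ and $D = R$ is the trivial map $\pi \colon S \to \{1\}$. This is the only real modelling decision to make before invoking the general result.

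With that choice fixed, I would apply Corollary \ref{coro:Wagner-Preston-ISGD} to obtain an inverse-preserving embedding $\alpha \colon S \to \mathcal{J}^{op}(\pi)$. The next step is to identify the target category with $\mathcal{J}(S)^{op}$. The underlying set here is $X = S$ (since $\pi^{-1}(1) = S$), and Example \ref{exe:PTpi-semigroup} already supplies the isomorphism $\mathcal{PT}^{op}(\pi) \simeq \mathcal{PT}(S)^{op}$ given by $(1,f,1) \mapsto f$. Restricting this isomorphism to the subcategory of bijections yields $\mathcal{J}^{op}(\pi) \simeq \mathcal{J}(S)^{op}$, so composing with $\alpha$ produces the desired embedding $S \to \mathcal{J}(S)^{op}$.

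The only thing to check with care is that the isomorphism from Example \ref{exe:PTpi-semigroup} does restrict to the bijective partial maps and preserves the inverse operation; this is immediate because a triple $(1,f,1) \in \mathcal{J}^{op}(\pi)_1$ corresponds to $f \in \mathcal{J}(S)$ exactly, and the inverse $(1,f,1)^{-1} = (1,f^{-1},1)$ matches $f^{-1}$ on the semigroup side. Hence the composite map is an inverse-preserving embedding, which is the content of the corollary.

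Since the heavy lifting is already packaged in Corollary \ref{coro:Wagner-Preston-ISGD} and Example \ref{exe:PTpi-semigroup}, there is no substantive obstacle; the only subtlety worth flagging is being explicit that ``inverse-preserving'' on the semigroupoid side transfers to ``inverse-preserving'' on the semigroup side under the one-object identification, so that the embedding inherits the right algebraic structure rather than merely being a multiplicative injection.
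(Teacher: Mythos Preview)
Your proposal is correct and follows exactly the paper's approach: treat $S$ as a one-object inverse semigroupoid with trivial $D,R \colon S \to \{1\}$, invoke Corollary \ref{coro:Wagner-Preston-ISGD}, and then identify $\mathcal{J}^{op}(\pi) \simeq \mathcal{J}(S)^{op}$ via the restriction of Example \ref{exe:PTpi-semigroup} to partial bijections. You are in fact slightly more careful than the paper in spelling out why the identification preserves inverses, but the argument is the same.
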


    \section{The Szendrei Expansion}
    
    The Szendrei expansion of a left restriction semigroup $S$, as defined in \cite[Definition 4.2]{gould2009partial}, is a left restriction semigroup $Sz(S)$ such that the premorphisms from $S$ {to left restriction semigroups} correspond bijectively to morphisms from $Sz(S)$ \cite[Theorem 5.2]{gould2009partial}. A natural question is whether this construction extends to the setting of restriction semigroupoids. The answer is affirmative, and we establish this correspondence in Theorem \ref{teo_princ}.
    
\subsection{The main construction}

    The (left restriction) monoid of partial maps $\mathcal{PT}(X)^{op}$ of a set $X$ has a natural order $\leq$, defined as
        $$ f \leq g \iff dom(f) \subseteq dom(g) \text{ and } f = g|_{dom(f)}. $$
    On the other hand, for arbitrary $f,g \in \mathcal{PT}(X)^{op}$ and the operation $\bullet$ as defined in \eqref{PTXop}, we have $dom(f^+ \bullet g) = dom(g \star id_{dom(f)}) = dom(f) \cap dom(g)$ and $(f^+ \bullet g)(x) = g(x)$, for every $x \in dom(f) \cap dom(g)$. Now, it is easy to see that
        $$ f^+ \bullet g = f \iff dom(f) \subseteq dom(g) \text{ and } g(x) = f(x), \ \forall x \in dom(f) \iff f \leq g. $$
        
    Inspired by this relation, we define a relation $\preceq$ in a left restriction semigroupoid $\cS$ by
    \begin{align} \label{nat_ord_gen}
        s \preceq t \iff (s^+,t) \in \cS^{(2)} \text{ and } s^+ t = s.
    \end{align}
   Notice that \( s \preceq t \) if and only if there exists \( e \in \cS^+  \) such that \( (e, t) \in \mathcal{S}^{(2)} \) and \( et = s \). Indeed, if \( s \preceq t \), then \( s^+  \in \cS^+ \), \( (s^+ , t) \in \mathcal{S}^{(2)} \), and \( s^+  t = s \). Conversely, if \( et = s \) for some \( e \in \cS^+ \), then, by Lemma \ref{lema212} and condition \eqref{lr3}, we have
 \[
s^+  = (et)^+  = (e t^+ )^+  = e t^+ ,
\]
    {therefore,
        $$ s = et = e t^+  t = s^+  t. $$}
    Furthermore, we notice that the relation $\preceq$ coincides with the partial order $\preceq_{\cS^+}$ on $\cS^+$, introduced in \eqref{defordememe}. In fact, for $e,f \in \cS^+$, we have $e \preceq f$ if and only if $(e,f) = (e^+,f) \in \cS^{(2)}$ and $ef = e^+ f = e$.
    
    In the case of inverse semigroupoids, $\cS^+ = E(\cS)$ and $\preceq$ is the natural partial order of $\cS$, as defined in \cite[\textsection 2.1]{liu2016free}. It is clear that if $s\in \cS$ and $e \in \cS^+$ are such that $(e,s) \in \cS^{(2)}$, then $es \preceq s$. The next result analyses the case in where $(s,e) \in \cS^{(2)}$.
    
    \begin{lemma} \label{prop221}
        If \( s \in \cS \) and \( e \in \cS^+ \) are such that \( (s,e) \in \cS^{(2)} \), then \( se \preceq s \).
    \end{lemma}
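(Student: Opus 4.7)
The plan is to apply axiom \eqref{lr4} directly and recognize that the resulting identity is, up to the trivial substitution $e^\ast = e$, exactly the defining condition for $\preceq$. First I would unwind the definition: by \eqref{nat_ord_gen}, establishing $se \preceq s$ amounts to showing two things, namely that $((se)^\ast, s) \in \cS^{(2)}$ and that $(se)^\ast s = se$.

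Second, I would apply \eqref{lr4} to the pair $(s,e) \in \cS^{(2)}$, which gives the identity $se^\ast = (se)^\ast s$. The fact that the right-hand side is defined immediately supplies the composability condition $((se)^\ast, s) \in \cS^{(2)}$; this is exactly the content of Remark \eqref{262}, which the authors have already noted is implicit in the statement of \eqref{lr4}.

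Third, I would use property \eqref{p2} of Lemma \ref{lema210}: since $e \in E$, we have $e^\ast = e$. Substituting this into the identity from the previous step yields $se = se^\ast = (se)^\ast s$, which is the second required equality. Combining the two observations gives $se \preceq s$ as desired.

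I do not expect any genuine obstacle: the result is essentially a direct translation of axiom \eqref{lr4} into the language of the partial order $\preceq$, with the only subtlety being the bookkeeping around definedness, which is already handled by Remark \eqref{262}. Thus the proof should amount to a single short chain of equalities together with one invocation of composability.
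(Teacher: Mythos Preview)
Your proposal is correct and follows essentially the same route as the paper's proof: apply \eqref{lr4} with $t=e$, use $e^\ast=e$ from \eqref{p2} to obtain $se=(se)^\ast s$, and read off $se\preceq s$. The paper is slightly terser---it invokes the characterization that $u\preceq v$ iff $u=fv$ for some $f\in E$, taking $f=(se)^\ast$---while you verify the defining condition \eqref{nat_ord_gen} directly, but there is no substantive difference.
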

    \begin{proof}
        From \eqref{lr4} we have $se = (se)^+ s$. Since {$(se)^+ \in \cS^+$}, it follows that $(se)^+ s \prec s$.
    \end{proof}
    
    From this point forward, assume that \( \cS \) and \( \T \) are left restriction semigroupoids. We will use the notation \( \preceq \) to denote the partial order defined in \eqref{nat_ord_gen} for both \( \cS \) and \( \T \).
    
    \begin{defi} \label{defi:premorphism}
        We say that a map $\varphi : \cS \to \T$ is a \emph{premorphism} if the following conditions are satisfied:
        \begin{enumerate}
            \item[(a)] If $st$ is defined, then $\varphi(s)\varphi(t)$ and $\varphi(s)^+ \varphi(st)$ are defined and $\varphi(s)\varphi(t) = \varphi(s)^+ \varphi(st)$;
            
            \item[(b)] $\varphi(s)^+  \preceq \varphi(s^+ )$, {for all $s \in S$}.
        \end{enumerate}
    \end{defi}
    
    Next, we establish some properties related to premorphisms of left restriction semigroupoids.
    
    \begin{lemma}
        Let $\varphi : \cS \to \T$ be a premorphism. Then:
        \begin{enumerate}\Not{pm}
            \item If $e \in E(\cS)$, then $\varphi(e) \in E(\T)$.\label{pm1}
            
            \item If {$e \in \cS^+$}, then {$\varphi(e) \in \mathcal{T}^+$}.\label{pm2}
            
            \item If {$e,f \in \cS^+$} and $e \preceq f$, then $\varphi(e) \preceq \varphi(f)$.\label{pm3}
            
            \item If $u,v \in \cS$ are such that $u \preceq v$, then $\varphi(u) \preceq \varphi(v)$.\label{pm4}
            
            \item For all $(s,t) \in \cS^{(2)}$, $\varphi(s)\varphi(t) = \varphi(st^+ )\varphi(t).$\label{pm5}
        \end{enumerate}
    \end{lemma}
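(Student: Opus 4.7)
The plan is to establish the five statements in order, each one feeding into the next, with axioms (a) and (b) of Definition~\ref{defi:premorphism} as the main tools.

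For \eqref{pm1}, apply (a) to the idempotent pair $(e,e) \in \cS^{(2)}$ and simplify using $ee = e$ together with \eqref{lr1} in $\T$. For \eqref{pm2}, note that $e = e^*$ by \eqref{p2}, so (b) reads $\varphi(e)^* \preceq \varphi(e)$; unwinding the partial order gives $\varphi(e)^*\varphi(e) = \varphi(e)^*$, while \eqref{lr1} in $\T$ gives $\varphi(e)^*\varphi(e) = \varphi(e)$, forcing $\varphi(e) = \varphi(e)^* \in F$.

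For \eqref{pm3}, apply (a) to $(e,f)$ (composable since $e \preceq f$ in $E$ forces $ef = e$), then invoke \eqref{pm1} and \eqref{pm2} to collapse $\varphi(e)\varphi(f) = \varphi(e)^*\varphi(ef) = \varphi(e)^*\varphi(e) = \varphi(e)$, which, using $\varphi(e) = \varphi(e)^*$, yields $\varphi(e) \preceq \varphi(f)$. For \eqref{pm4}, from $u \preceq v$ apply (a) to $(u^*, v)$ and use $\varphi(u^*)^* = \varphi(u^*)$ (by \eqref{pm2}) to obtain $\varphi(u^*)\varphi(v) = \varphi(u^*)\varphi(u)$; now left-multiply both sides by $\varphi(u)^*$, absorb $\varphi(u)^*\varphi(u^*) = \varphi(u)^*$ (which holds by (b)) on both sides, and conclude via \eqref{lr1} that $\varphi(u)^*\varphi(v) = \varphi(u)^*\varphi(u) = \varphi(u)$, i.e., $\varphi(u) \preceq \varphi(v)$.

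The main obstacle is \eqref{pm5}, where the target chain of equalities is
\[
\varphi(s)\varphi(t) \;=\; \varphi(s)\varphi(t^*)\varphi(t) \;=\; \varphi(s)^*\varphi(st^*)\varphi(t) \;=\; \varphi(st^*)\varphi(t).
\]
Two auxiliary identities have to be put in place first. The first is $\varphi(t^*)\varphi(t) = \varphi(t)$: from (b), $\varphi(t)^* \preceq \varphi(t^*)$, which by commutativity of idempotents gives $\varphi(t^*)\varphi(t)^* = \varphi(t)^*$, and then $\varphi(t^*)\varphi(t) = \varphi(t^*)\varphi(t)^*\varphi(t) = \varphi(t)^*\varphi(t) = \varphi(t)$. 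The second is $\varphi(s)^*\varphi(st^*) = \varphi(st^*)$: by Lemma~\ref{prop221}, $st^* \preceq s$, so \eqref{pm4} yields $\varphi(st^*) \preceq \varphi(s)$, whence $\varphi(st^*) = \varphi(st^*)^*\varphi(s)$, and then commutativity together with \eqref{lr1} produce $\varphi(s)^*\varphi(st^*) = \varphi(s)^*\varphi(st^*)^*\varphi(s) = \varphi(st^*)^*\varphi(s)^*\varphi(s) = \varphi(st^*)^*\varphi(s) = \varphi(st^*)$. The middle step of the chain is just axiom (a) applied to $(s, t^*)$. Throughout, composability of each intermediate product has to be certified via the associativity clauses \eqref{s1}--\eqref{s3}, but each such verification follows routinely from composabilities already established at earlier stages; the real work is discovering that the right two idempotent manipulations make the chain close up.
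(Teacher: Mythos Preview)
Your argument is correct. Items \eqref{pm1}--\eqref{pm4} match the paper's proof almost verbatim; your choice in \eqref{pm4} to take $e = u^*$ from the outset is a mild streamlining of the paper's version, which works with an arbitrary $e\in E$ satisfying $ev=u$ and then has to invoke \eqref{pm3} to get $\varphi(u)^*\preceq\varphi(e)$.

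For \eqref{pm5} you take a genuinely different route. The paper establishes the equality by antisymmetry: first $\varphi(st^*)\varphi(t)\preceq\varphi(s)\varphi(t)$ via \eqref{pm4} and compatibility of $\preceq$ with composition, and then the reverse inequality by pushing $\varphi(t)^*\preceq\varphi(t^*)$ through a longer chain involving \eqref{lr4}, \eqref{lr3}, and Lemma~\ref{lema212}. Your approach instead isolates the identity $\varphi(t^*)\varphi(t)=\varphi(t)$ up front and then produces a direct three-step chain of equalities, with axiom~(a) for $(s,t^*)$ in the middle and the absorption $\varphi(s)^*\varphi(st^*)=\varphi(st^*)$ (from \eqref{pm4}) at the end. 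This is shorter and avoids the two-sided $\preceq$ argument entirely; the paper's version has the minor advantage of not needing to verify the extra composabilities for the triple product $\varphi(s)\varphi(t^*)\varphi(t)$, but as you note these follow routinely from \eqref{s1}--\eqref{s3}.
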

    \begin{proof}
        (\textit{pm1}): If $e \in E(\cS)$, then $ee$ is defined and $ee = e$. Now, it follows from Definition \ref{defi:premorphism}(a) and \eqref{lr1} that $\varphi(e)\varphi(e) = \varphi(e)^+ \varphi(ee) = \varphi(e)^+ \varphi(e) = \varphi(e)$. That is, $\varphi(e) \in E(\T)$.\\
        
        (\textit{pm2}): Assume that {$e \in \cS^+$}. Then, by Definition \ref{defi:premorphism}(b), we have $\varphi(e)^+ \preceq \varphi(e^+)$. That is, $\varphi(e)^+ = \varphi(e)^+ \varphi(e^+)$. Since $e \in E$, it follows from Lemma \ref{lema210} and \eqref{lr1} that $\varphi(e)^+ = \varphi(e)^+ \varphi(e) = \varphi(e)$. Hence, {$\varphi(e) \in \mathcal{T}^+$}.\\
    
        (\textit{pm3}): Consider {$e,f \in \cS^+$} such that $e \preceq f$. Then $ef$ is defined and $e = ef$. Applying Definition \ref{defi:premorphism}(a), \eqref{pm2} together with Lemma \ref{lema210}, and \eqref{pm1}, we obtain 
            $$ \varphi(e)\varphi(f) = \varphi(e)^+ \varphi(ef) = \varphi(e)\varphi(ef) = \varphi(e)\varphi(e) = \varphi(e). $$
        Therefore $\varphi(e) \preceq \varphi(f)$.\\
        
        (\textit{pm4}): Let $u,v \in \cS$ be such that $u \preceq v$. Then there is {$e \in \cS^+$} such that $ev$ is defined and $u = ev$. Applying Definition \ref{defi:premorphism}(a), \eqref{pm2} together with Lemma \ref{lema210}, and using that $u \preceq v$, it follows that
            $$ \varphi(e)\varphi(v) = \varphi(e)^+ \varphi(ev) = \varphi(e)\varphi(ev) = \varphi(e)\varphi(u). $$
        On the other hand, since $u = ev$, we can use Lemma \ref{lema212} to obtain $u^+ = (ev)^+ = (ev)^+ e^+$. That is, $u^+ \leq e^+ = e$. By Definition \ref{defi:premorphism}(b) and \eqref{pm3} we have $\varphi(u)^+ \leq \varphi(u^+) \leq \varphi(e)$. Therefore, multiplying the previous identity by $\varphi(u)^+$ on the left, we obtain
            $$ \varphi(u)^+ \varphi(v) = \varphi(u)^+ \varphi(e) \varphi(v) = \varphi(u)^+ \varphi(e) \varphi(u) = \varphi(u)^+ \varphi(u) = \varphi(u). $$
        Where, in the last identity, we use \eqref{lr1}. Therefore, $\varphi(u) \preceq \varphi(v)$.\\
        
        (\textit{pm5}): If $st$ is defined then $st^+$ is defined, and hence $(st^+)t$ is defined. Therefore, it follows from Definition \ref{defi:premorphism}(a) that $\varphi(s)\varphi(t)$ and $\varphi(st^+)\varphi(t)$ are defined. Furthermore, since the partial order $\preceq$ is compatible with composition, we have
        \begin{align*}
            \varphi(s)\varphi(t) &= \varphi(s) \varphi(t)^+ \varphi(t) & \eqref{lr1} \\
            &\preceq \varphi(s) \varphi(t^+) \varphi(t) & \ref{defi:premorphism}(b) \\
            &\preceq \varphi(st^+) \varphi(t). & \ref{defi:premorphism}(a)
        \end{align*}
        On the other hand we have $st^+ \preceq s$ by Lemma \ref{prop221}, $\varphi(st^+) \preceq \varphi(s)$ by \eqref{pm4} and therefore $\varphi(st^+)\varphi(t) \preceq \varphi(s)\varphi(t)$ from the fact that $\preceq$ is compatible with composition. Since $\preceq$ is a partial order, we conclude that $\varphi(s)\varphi(t) = \varphi(st^+)\varphi(t)$.
    \end{proof}
    
 We now begin the construction of the Szendrei expansion of a left restriction semigroupoid.
    
    \begin{defi}
        The Szendrei expansion $Sz(\cS)$ of $\cS$ is defined as the set
        \begin{align*}
            Sz(\cS) = \{ (A,a) \in \mathcal{P}^f(\cS) \times \cS : a, a^+  \in A \text{ and } {b^+ = a^+, \forall b \in A} \}
        \end{align*}
        {where $\mathcal{P}^f(\cS)$ is the set of all finite subsets of $\cS$,} equipped with the following partial binary operation:
        \begin{align*}
            (A,a)(B,b) = \begin{cases}
                ((ab)^+ A \cup aB, ab), \text{ if } \exists ab, \\
                \text{undefined, otherwise,}
            \end{cases}
        \end{align*}
    \end{defi}
    
    Observe that this operation is well defined. Indeed, {since $b^+ = c^+$, for all $c \in B$, and $ab$ is defined if and only if $ab^+$ is defined, it follows that $ac$ is defined, for all $c \in B$, and} in this case $(ac)^+  = (ac^+)^+ = (ab^+)^+ = (ab)^+$. On the other hand, it follows from Lemma \ref{lema212} that  $(ab)^+ = (ab)^+ a^+$. In particular, we have that $(ab)^+ a^+ $ is defined, which is the same as saying that $(ab)^+ c$ is defined, for all {$c \in A$}. {Thus,} $((ab)^+ c)^+  = ((ab)^+ c^+ )^+  = (ab)^+ $, for all {$c \in A$}. Therefore, { we have $c^+ = (ab)^+$, for all $c \in (ab)^+A \cup aB$} and the operation is indeed well defined.
    
    \begin{prop} \label{prop:sz-semigroupoid}
        Sz$(\cS)$ is a left restriction semigroupoid with restriction {operation} given by $(A,a)^+  = (A,a^+ )$.
    \end{prop}
    
    \begin{proof}
        Note that the existence of the composition in Sz$(\cS)$ depends only on the second coordinate of the elements. For any $(A,a), (B,b), (C,c) \in Sz(\cS)$, we have:
        \begin{align*}
            [(A,a)(B,b)](C,c) \text{ is defined } & \Leftrightarrow (A,a)[(B,b)(C,c)] \text{ is defined } \\ & \Leftrightarrow (A,a)(B,b) \text{ and } (B,b)(C,c) \text{ are defined. }
        \end{align*}
        In this case, we have:
        \begin{align*}
            [(A,a)(B,b)](C,c) &= ((ab)^+ A \cup aB,ab)(C,c) \\
            &= ((abc)^+ (ab)^+ A \cup (abc)^+ aB \cup abC,abc) \\
            &= ((abc)^+ (ab)^+ A \cup a(bc)^+ B \cup abC, abc) & \eqref{lr4} \\
            &= ((abc)^+ A \cup a(bc)^+ B \cup abC, abc) & \eqref{lema212} \\
            &= (A,a)((bc)^+ B \cup bC,bc) \\
            &= (A,a)[(B,b)(C,c)].
        \end{align*}
        This establishes that Sz\((\mathcal{S})\) is indeed a semigroupoid. To prove that $(A,a)^+ = (A,a^+)$ defines a left restriction structure on Sz\((\mathcal{S})\) we {verify conditions \eqref{lr1}, \eqref{lr2}, \eqref{lr3} and \eqref{lr4}.}\\

        \noindent\eqref{lr1} {Since $a^+a$ is defined, for all $a \in \cS$, it follows that $(A,a^+)(A,a)$ is defined and
            $$ (A,a^+)(A,a) = ((a^+a)^+ A \cup a^+A, a^+a) = (a^+A,a) = (A,a), $$
        where in the second equality we use that $a^+b = b^+b = b$, for all $b \in A$.}\\

        \noindent\eqref{lr2} {Since $a^+b^+$ is defined if and only if $b^+a^+$ is defined, we have that $(A,a^+)(B,b^+)$ is defined if and only if $(B,b^+)(A,a^+)$. Furthermore, we have
            $$ (A,a^+)(B,b^+) = ((a^+b^+)^+ A \cup a^+B, a^+b^+) = (a^+b^+(A \cup B), a^+b^+), $$
        where in the second equality we use that $a^+b^+ \in \cS^+$, hence $(a^+b^+)^+ = a^+b^+$, and that $a^+c = a^+c^+c = a^+b^+c$, for all $c \in B$. Analogously, we obtain $(B,b^+)(A,a^+) = (b^+a^+(B \cup A), b^+a^+)$. Since $a^+b^+ = b^+a^+$, it follows that $(A,a^+)(B,b^+) = (B,b^+)(A,a^+)$.}\\

        \noindent\eqref{lr3} {Suppose that $a^+b$ is defined. Then
        \begin{align*}
            ((A,a^+)(B,b))^+ &= ((a^+b)^+A \cup a^+B, (a^+b)^+) \\
            &= (a^+b^+A \cup a^+B, a^+b^+) \\
            &= (a^+b^+(A \cup B), a^+b^+) \\
            &= (A,a^+)(B,b^+),
        \end{align*}
        where in the second equality we use \eqref{lr3}, $(a^+b)^+ = a^+b^+$, and in the third and fourth equalities we use the same argument as in \eqref{lr2}.}\\

        \noindent\eqref{lr4} {Suppose that $ab$ is defined. Then
        \begin{align*}
            ((A,a)(B,b))^+ (A,a) &= ((ab)^+A \cup aB, (ab)^+)(A,a) \\
            &= ( ((ab)^+a)^+ ((ab)^+A \cup aB) \cup (ab)^+A, (ab)^+a ) \\
            &= ( (ab^+)^+(ab)^+A \cup (ab^+)^+aB \cup (ab)^+A, ab^+ ) \\
            &= ( (ab^+)^+(ab^+)^+A \cup (ab)^+aB \cup (ab^+)^+A, ab^+ ) \\
            &= ( (ab^+)^+A \cup ab^+ B, ab^+ ) \\
            &= ( (ab^+)^+A \cup aB, ab^+ ) \\
            &= (A,a)(B,b^+),
        \end{align*}
        where in the third equality we use \eqref{lr4}, $(ab)^+a = ab^+$, in the fourth equality we use Lemma \ref{lema212}, $(ab)^+ = (ab^+)^+$, and that $ee = e$, for all $e \in \cS^+$, in the fifth equality we use \eqref{lr4} again and in the sixth equality we use that $b^+B = b$.}

        {Therefore, $Sz(\cS)$ is a left restriction semigroupoid.}\\
    \end{proof}
    
The following lemma describes the natural partial order on \(\mathrm{Sz}(\cS)\), which we will also denote by \(\preceq\). Its proof is identical to that of \cite[Proposition 5.a.ii]{gomes2006generalized} and is therefore omitted.
 
    \begin{lemma}
        Let $(A,a), (B,b) \in Sz(\cS)$. Then
            $$ (A,a) \preceq (B,b) \iff a \preceq b \text{ and } a^+ B \subseteq A. $$
    \end{lemma}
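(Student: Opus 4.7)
The plan is to unfold the definition of the natural partial order on $Sz(\cS)$: by \eqref{nat_ord_gen}, we have $(A,a) \preceq (B,b)$ if and only if $(A,a)^\ast(B,b)$ is defined and equal to $(A,a)$. Using the left restriction structure obtained in Proposition \ref{prop:sz-semigroupoid}, one has $(A,a)^\ast = (A, a^\ast)$, so the whole statement reduces to comparing the product $(A, a^\ast)(B,b)$ with $(A,a)$ coordinatewise.

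First I would note that $(A, a^\ast)(B,b)$ is defined precisely when $a^\ast b$ is defined, and in that case it equals $((a^\ast b)^\ast A \cup a^\ast B,\; a^\ast b)$ by the definition of composition in $Sz(\cS)$. Matching second coordinates gives the equation $a^\ast b = a$, which is exactly the condition $a \preceq b$ in $\cS$ from \eqref{nat_ord_gen}. So the order on $Sz(\cS)$ forces $a \preceq b$ on the second entries, and conversely any $a \preceq b$ automatically ensures $a^\ast b$ is defined and equal to $a$, yielding $(a^\ast b)^\ast = a^\ast$.

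Next, assuming $a \preceq b$, the first-coordinate equation becomes $a^\ast A \cup a^\ast B = A$. The key simplification here is that the membership condition $A \subseteq (\tilde{\mathcal{L}}_E)_a$, built into the very definition of $Sz(\cS)$, together with \eqref{r1} of Proposition \ref{propequival1} forces $c^\ast = a^\ast$ for every $c \in A$. Hence $a^\ast c = c^\ast c = c$ by \eqref{lr1}, which gives $a^\ast A = A$. The first-coordinate condition thus reduces to $A \cup a^\ast B = A$, i.e.\ $a^\ast B \subseteq A$. The converse direction follows by running these equivalences backwards, using $a^\ast A = A$ once more to recover the original first coordinate.

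The main obstacle I anticipate is keeping the bookkeeping clean: the identification $a^\ast A = A$ is the only non-routine step, and it hinges on the constraint $A \subseteq (\tilde{\mathcal{L}}_E)_a$ that is silently encoded in the definition of $Sz(\cS)$. Everything else is a routine coordinate-by-coordinate comparison under a well-defined partial operation. This is presumably why the authors invoke the analogy with \cite[Proposition 5.a.ii]{gomes2006generalized} and omit the calculation.
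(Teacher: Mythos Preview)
Your proposal is correct and follows exactly the natural approach one would expect (and that the paper implicitly points to via \cite[Proposition 5.a.ii]{gomes2006generalized}): unfold $\preceq$ via $(A,a)^\ast=(A,a^\ast)$, compute $(A,a^\ast)(B,b)=((a^\ast b)^\ast A\cup a^\ast B,\,a^\ast b)$, match coordinates, and use $a^\ast A=A$ from $A\subseteq(\tilde{\mathcal{L}}_E)_a$. There is nothing to add.
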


 Define the map \(\iota : \cS \to \mathrm{Sz}(\cS)\) by \(\iota(s) = (\{ s^+ , s \}, s)\). Using an argument similar to the one applied in the case of left restriction semigroups \cite[Proposition 4.5]{gould2009partial}, we obtain the following expression:
 \[
 (\{s^+ , s_1, s_2, \ldots, s_n = s\}, s) = \iota(s_1)^+  \iota(s_2)^+  \cdots \iota(s_n)^+  \iota(s),
 \]
 from which the next result follows immediately.

    \begin{prop} \label{propdecszendreisemrest}
        The Szendrei expansion Sz$(\cS)$ of a left restriction semigroupoid $\cS$ is generated by the elements of the form $\iota(s)$ via composition and $^+ $.
    \end{prop}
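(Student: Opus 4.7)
The plan is to establish the explicit decomposition
\[
(\{s^*, s_1, s_2, \ldots, s_{n-1}, s\}, s) \;=\; \iota(s_1)^* \iota(s_2)^* \cdots \iota(s_{n-1})^* \iota(s)
\]
for every $s \in \cS$ and every finite family $s_1, \ldots, s_{n-1} \in (\tilde{\mathcal{L}}_E)_s$, from which the proposition follows at once: given any $(A,a) \in Sz(\cS)$, the defining conditions guarantee $a^*, a \in A$ and $A \subseteq (\tilde{\mathcal{L}}_E)_a$, so enumerating $A = \{a^*, b_1, \ldots, b_{m-1}, a\}$ and applying the formula writes $(A,a)$ as a composition of terms of the form $\iota(b_i)^*$ and $\iota(a)$.

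The key preliminary observation is that, by condition \eqref{r1} of Proposition \ref{propequival1}, each element $b$ of the class $(\tilde{\mathcal{L}}_E)_s$ satisfies $b^* = s^*$. In particular $\iota(b)^* = (\{s^*, b\}, s^*)$, and by \eqref{lr1} the product $s^* b$ equals $b$ (since $s^* = b^*$ is the unique left identity for $b$ in $E$). This lemma trivializes most of the set-theoretic bookkeeping.

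I would then prove the formula by induction on $n$. The base case $n = 1$ is simply the definition $\iota(s) = (\{s^*, s\}, s)$. For the inductive step, set
\[
P \;=\; \iota(s_2)^* \cdots \iota(s_{n-1})^* \iota(s) \;=\; (\{s^*, s_2, \ldots, s_{n-1}, s\}, s)
\]
by the induction hypothesis, and write $\iota(s_1)^* = (\{s^*, s_1\}, s^*)$. Applying the definition of the binary operation in $Sz(\cS)$, the product $\iota(s_1)^* P$ has second coordinate $s^* s = s$, and first coordinate
\[
(s^* s)^* \{s^*, s_1\} \;\cup\; s^* \{s^*, s_2, \ldots, s_{n-1}, s\}.
\]
Using $(s^* s)^* = s^*$ together with the preliminary observation ($s^* b = b$ whenever $b \in (\tilde{\mathcal{L}}_E)_s$), this collapses to $\{s^*, s_1, s_2, \ldots, s_{n-1}, s\}$, completing the induction.

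The main obstacle, though modest, is verifying that in the inductive computation the two sets $(s^* s)^* \{s^*, s_1\}$ and $s^* \{s^*, s_2, \ldots, s_{n-1}, s\}$ indeed collapse correctly; but once one notes that every element appearing inside the set braces lies in the single $\tilde{\mathcal{L}}_E$-class of $s$ and is therefore fixed by left multiplication by $s^*$, the calculation is immediate. Nothing else in the argument requires more than the axioms \eqref{lr1}--\eqref{lr3} and Proposition \ref{propequival1}.
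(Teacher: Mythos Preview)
Your proposal is correct and follows essentially the same route as the paper: the paper records the identity
\[
(\{s^*, s_1, \ldots, s_n = s\}, s) = \iota(s_1)^* \cdots \iota(s_n)^* \iota(s)
\]
(citing the left restriction semigroup analogue for the verification) and deduces the proposition immediately from it. Your induction on $n$, using $b^* = s^*$ for $b \in (\tilde{\mathcal{L}}_E)_s$ to collapse the set in the product formula, is exactly the computation that underlies that identity; the only cosmetic difference is that the paper keeps the redundant factor $\iota(s_n)^* = \iota(s)^*$ in the product, whereas you drop it.
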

    
We now proceed to state the principal result of this work.
    
    \begin{theorem} \label{teo_princ}
        Let $\cS$ and $\T$ be left restriction semigroupoids. If $\varphi : \cS \to \T$ is a premorphism, then there is a unique restriction morphism $\overline{\varphi} : Sz(\cS) \to \T$ such that $\varphi = \overline{\varphi} \iota$. That is, the following diagram commutes:
        \begin{center}
            \begin{tikzpicture}
                \tikzstyle{every path}=[draw, ->];
        
                \node (S) at (0,0) {$\mathcal{S}$};
                \node (Sz) at (0,-2) {$Sz(\mathcal{S})$};
                \node (T) at (2,0) {$\mathcal{T}$};
        
                \path (S) to node[above]{$\varphi$} (T);
                \path (S) to node[left]{$\iota$} (Sz);
                \path[dashed] (Sz) to node[below right]{$\overline{\varphi}$} (T);
            \end{tikzpicture}
        \end{center}
        Reciprocally, if $\overline{\varphi} : Sz(\cS) \to \T$ is a restriction morphism, then $\varphi = \overline{\varphi}\iota$ is a premorphism.
    \end{theorem}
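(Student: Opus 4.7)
The plan is to define $\overline{\varphi}$ via the decomposition from Proposition \ref{propdecszendreisemrest}. For $(A,a) \in Sz(\cS)$, set $\overline{\varphi}(A,a) = e_A\, \varphi(a)$, where $e_A = \prod_{x \in A} \varphi(x)^\ast$ with the product taken in any order. The first step is well-definedness: since $A \subseteq (\tilde{\mathcal{L}}_E)_a$ every $x \in A$ satisfies $x^\ast = a^\ast$, so by Definition \ref{defi:premorphism}(b) each $\varphi(x)^\ast$ lies below $\varphi(a^\ast) \in F$. Hence all the $\varphi(x)^\ast$ belong to the same $\sim$-class of $F$ and, by \eqref{lr2}, pairwise commute and compose to form $e_A$ independently of the enumeration; since $\varphi(a)^\ast$ is one of the factors we have $e_A \preceq \varphi(a)^\ast$, so $e_A\, \varphi(a)$ is defined. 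The identity $\overline{\varphi}\iota = \varphi$ is then the calculation $\overline{\varphi}(\{s^\ast,s\}, s) = \varphi(s^\ast)\varphi(s)^\ast\varphi(s) = \varphi(s)$, where \eqref{pm2} gives $\varphi(s^\ast) \in F$ and $\varphi(s)^\ast \preceq \varphi(s^\ast)$ collapses the idempotent prefix.

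The substantive step is showing that $\overline{\varphi}$ is a morphism. For $(A,a)(B,b) = ((ab)^\ast A \cup aB, ab)$ I will expand $\overline{\varphi}((A,a)(B,b)) = e_{(ab)^\ast A}\, e_{aB}\, \varphi(ab)$ and $\overline{\varphi}(A,a)\overline{\varphi}(B,b) = e_A\, \varphi(a)\, e_B\, \varphi(b)$, and reduce them to a common form. The key maneuver is to push $\varphi(a)$ through $e_B$: by \eqref{lr4} together with Definition \ref{defi:premorphism}(a) and \eqref{lr3}, each factor $\varphi(a)\varphi(y)^\ast$ rewrites as $\varphi(a)^\ast\, \varphi(ay)^\ast\, \varphi(a)$; iterating, commuting $F$-idempotents via \eqref{lr2}, and then invoking $\varphi(a)\varphi(b) = \varphi(a)^\ast\varphi(ab)$, the right-hand side becomes $e_A\, e_{aB}\, \varphi(ab)$. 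The remaining equality $e_A\, e_{aB} = e_{(ab)^\ast A}\, e_{aB}$ will follow from the identity $\varphi((ab)^\ast)\varphi(x)^\ast = \varphi((ab)^\ast)\varphi((ab)^\ast x)^\ast$, obtained by applying $^\ast$ and \eqref{lr3} to the premorphism relation $\varphi((ab)^\ast)\varphi(x) = \varphi((ab)^\ast)\varphi((ab)^\ast x)$, once one notes that $e_{aB} \preceq \varphi((ab)^\ast)$. Preservation of $^\ast$ is cleaner: $\overline{\varphi}(A, a^\ast) = e_A\, \varphi(a^\ast) = e_A$ because $e_A \preceq \varphi(a^\ast)$, and $\overline{\varphi}(A,a)^\ast = (e_A \varphi(a))^\ast = (e_A \varphi(a)^\ast)^\ast = e_A$ by Lemma \ref{lema212} and Lemma \ref{lema210}\eqref{p2}.

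Uniqueness is immediate from Proposition \ref{propdecszendreisemrest}, since any restriction morphism out of $Sz(\cS)$ is determined by its values on the generating set $\iota(\cS)$. For the converse, given a restriction morphism $\overline{\varphi} \colon Sz(\cS) \to \T$ I set $\varphi = \overline{\varphi}\iota$. A direct computation in $Sz(\cS)$ shows $\iota(s)\iota(t) = \iota(s)^\ast \iota(st) = (\{(st)^\ast, st^\ast, st\}, st)$ whenever $st$ is defined, and applying $\overline{\varphi}$ yields axiom (a) of Definition \ref{defi:premorphism} together with the necessary defined-ness; axiom (b) follows from $\iota(s)^\ast = (\{s^\ast, s\}, s^\ast) \preceq (\{s^\ast\}, s^\ast) = \iota(s^\ast)$, immediate from the order characterization stated just before Proposition \ref{propdecszendreisemrest}, combined with the fact that restriction morphisms preserve $\preceq$ by the same argument used in \eqref{pm4}. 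The main obstacle is the morphism computation in the second paragraph: tracking which idempotents dominate which, and systematically converting premorphism relations into product equalities via repeated use of \eqref{lr3} and \eqref{lr4}.
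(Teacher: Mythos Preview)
Your proposal is correct and follows essentially the same approach as the paper: the definition of $\overline{\varphi}$, the well-definedness and $^\ast$-preservation arguments, and the appeal to Proposition \ref{propdecszendreisemrest} for uniqueness are identical, and your morphism verification uses the same core identities (your ``push $\varphi(a)$ through $e_B$'' is the paper's step $(\varphi(s)\varphi(t_i))^\ast\varphi(s) = \varphi(s)\varphi(t_i)^\ast$, and your $e_A e_{aB} = e_{(ab)^\ast A} e_{aB}$ is the paper's $\varphi((st)^\ast s_i)^\ast\varphi(st)^\ast = \varphi(s_i)^\ast\varphi(st)^\ast$), only organized as a meet-in-the-middle reduction rather than a left-to-right transformation. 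Your converse for axiom~(b) via $\iota(s)^\ast \preceq \iota(s^\ast)$ and order preservation is a slightly cleaner repackaging of the paper's explicit computation, but not a different idea.
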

    \begin{proof}
        Given a premorphism $\varphi : \cS \to \T$, we define $\overline{\varphi} : Sz(\cS) \to \T$ by
        \begin{align*}
            \overline{\varphi}(\{s^+ ,s_1,s_2, \ldots, s_n = s\}, s) = \varphi(s_1)^+ \varphi(s_2)^+ \cdots\varphi(s_n)^+ \varphi(s). 
        \end{align*}
        It is well defined as $s_i^+  = s^+ $ for all $1 \leq i \leq n$, and $\varphi(s_i)^+  \preceq \varphi(s_i^+ ) = \varphi(s^+ )$ for all $1 \leq i \leq n$. Hence, each $\varphi(s_i)^+ $ belongs to the same meet semilattice  component of the decomposition of {$\mathcal{T}^+$} as $\varphi(s^+ )$. Note that {$\varphi(s^+) \in \mathcal{T}^+$} by \eqref{pm2}.

     We will show that $\overline{\varphi}$ preserves $^+ $. Let $A = \{ s^+ , s_1, s_2, \ldots, s_n = s \}$. Then we have:
        \begin{align*}
            \overline{\varphi}((A,s)^+ ) = \overline{\varphi}(A,s^+ ) = \varphi(s_1)^+  \cdots \varphi(s_n)^+ \varphi(s^+ ) = \varphi(s_1)^+  \cdots \varphi(s_n)^+ ,
        \end{align*}
        since $\varphi(s_n)^+  = \varphi(s)^+  \preceq \varphi(s^+ )$. On the other hand, by Lemma \ref{lema212}, we have
        \begin{align*}
            \overline{\varphi}(A,s)^+  = (\varphi(s_1)^+  \cdots \varphi(s_n)^+ \varphi(s))^+  = (\varphi(s_1)^+  \cdots \varphi(s_n)^+ \varphi(s)^+ )^+  = \varphi(s_1)^+  \cdots \varphi(s_n)^+ ,
        \end{align*}
        from which it follows that $\overline{\varphi}((A,s)^+ ) = \overline{\varphi}(A,s)^+ $.
        
        To verify that $\overline{\varphi}$ preserves composition, consider $(A,s), (B,t) \in Sz(\cS)$ such that $(A,s)(B,t)$ is defined, where $A = \{s^+ , s_1, s_2, \ldots, s_n = s \}$ and $B = \{t^+ , t_1, \ldots, t_m = t \}$. On the one hand,
        \begin{align*}
            \overline{\varphi}(A,s)\overline{\varphi}(B,t) = \varphi(s_1)^+  \cdots \varphi(s_n)^+ \varphi(s)\varphi(t_1)^+ \cdots\varphi(t_m)^+ \varphi(t).
        \end{align*}
        On the other hand,
        \begin{align*}
            \overline{\varphi}((A,s)(B,t)) &= \overline{\varphi}((st)^+ A \cup sB, st) \\
            &= \varphi((st)^+ s_1)^+  \cdots \varphi((st)^+ s_n)^+ \varphi(st^+ )^+ \varphi(st_1)^+ \cdots\varphi(st_m)^+ \varphi(st).
        \end{align*}
        Given that $\varphi$ is a premorphism, it follows that $\varphi(st)^+  \preceq \varphi((st)^+ )$, and thus
        \begin{align*}
            \varphi((st)^+ s_i)^+ \varphi(st)^+  & = \varphi((st)^+ s_i)^+ \varphi((st)^+ )\varphi(st)^+  & (\ref{defi:premorphism}(b))\\
            & = \varphi((st)^+ )\varphi((st)^+ s_i)^+ \varphi(st)^+  & \eqref{lr2} \\
            & = \varphi((st)^+ )^+ \varphi((st)^+ s_i)^+ \varphi(st)^+  & \eqref{pm2} \\
            & = (\varphi((st)^+ )^+ \varphi((st)^+ s_i)^+ )^+ \varphi(st)^+  & \\
            & = (\varphi((st)^+ )^+ \varphi((st)^+ s_i))^+ \varphi(st)^+  & \eqref{lr3} \\
            & = (\varphi((st)^+ )\varphi(s_i))^+ \varphi(st)^+  & (\ref{defi:premorphism}(a))\\
            & = \varphi((st)^+ )\varphi(s_i)^+ \varphi(st)^+  & \eqref{lr3} \\
            & = \varphi(s_i)^+ \varphi(st)^+  & (\ref{defi:premorphism}(b)).
        \end{align*}
        Since $\varphi(st) = \varphi(st)^+ \varphi(st)$, and elements of {$\mathcal{T}^+$} commute, we may insert $n$ factors of the form $\varphi(st)^+ $ on the right-hand side of the expression
        \begin{align*}
            \overline{\varphi}((A,s)(B,t)) = \varphi((st)^+ s_1)^+  \cdots \varphi((st)^+ s_n)^+ \varphi(st^+ )^+ \varphi(st_1)^+ \cdots\varphi(st_m)^+ \varphi(st)
        \end{align*}
        and rearrange them using commutativity to obtain
\begin{align*}
            & \overline{\varphi}((A,s)(B,t)) \\
            & = \varphi((st)^+ s_1)^+ \varphi(st)^+ \varphi((st)^+ s_2)^+ \varphi(st)^+  \cdots \varphi((st)^+ s_n)^+ \varphi(st)^+ \varphi(st^+ )^+ \varphi(st_1)^+ \cdots\varphi(st_m)^+ \varphi(st).
        \end{align*}
        Using the equality $\varphi((st)^+ s_i)^+ \varphi(st)^+  = \varphi(s_i)^+ \varphi(st)^+ $, and commuting the factors $\varphi(st)^+ $ to the left of $\varphi(st)$ so that they cancel out, we obtain:
    \begin{align*}
            \overline{\varphi}((A,s)(B,t)) = \varphi(s_1)^+ \varphi(s_2)^+  \cdots \varphi(s_n)^+ \varphi(st^+ )^+ \varphi(st_1)^+ \cdots\varphi(st_m)^+ \varphi(st).
        \end{align*}
        Now, we commute the factor $\varphi(st^+ )^+ $ to the left of $\varphi(st)$ and observe that
        \begin{align*}
        \varphi(st^+ )^+ \varphi(st) \overset{\eqref{lr1}}{=} \varphi(st^+ )^+ \varphi(st^+ t) \overset{(\ref{defi:premorphism}(a))}{=} \varphi(st^+ )\varphi(t) \overset{\eqref{pm5}}{=} \varphi(s)\varphi(t).
        \end{align*}
        Hence,
        \begin{align*}
            \overline{\varphi}((A,s)(B,t)) = \varphi(s_1)^+  \cdots \varphi(s_n)^+ \varphi(st_1)^+ \cdots\varphi(st_m)^+ \varphi(s)\varphi(t).
        \end{align*}
        Since $\varphi(s)^+ \varphi(s) = \varphi(s)$ and the elements of {$\mathcal{T}^+$} commute, we now insert $m$ factors $\varphi(s)^+ $ to obtain
        \begin{align*}
            \overline{\varphi}((A,s)(B,t)) = \varphi(s_1)^+  \cdots \varphi(s_n)^+ \varphi(s)^+ \varphi(st_1)^+ \varphi(s)^+ \varphi(st_2)^+ \cdots\varphi(s)^+ \varphi(st_m)^+ \varphi(s)\varphi(t).
        \end{align*}
        Observe that $\varphi(s)^+ \varphi(st_i)^+  = (\varphi(s)^+ \varphi(st_i)^+ )^+  \overset{\eqref{lr3}}{=}(\varphi(s)^+ \varphi(st_i))^+  \overset{(\ref{defi:premorphism}(a))}{=} (\varphi(s)\varphi(t_i))^+ $. Therefore,
        \begin{align*}
            \overline{\varphi}((A,s)(B,t)) = \varphi(s_1)^+  \cdots \varphi(s_n)^+ (\varphi(s)\varphi(t_1))^+ (\varphi(s)\varphi(t_2))^+ \cdots(\varphi(s)\varphi(t_m))^+ \varphi(s)\varphi(t).
        \end{align*}
        By Lemma \ref{lema212} and \eqref{lr3}, we have $(\varphi(s)\varphi(t_i))^+ \varphi(s) = (\varphi(s)\varphi(t_i)^+ )^+ \varphi(s) = \varphi(s)\varphi(t_i)^+ $. Applying this identity iteratively, starting from $(\varphi(s)\varphi(t_m))^+ \varphi(s)$ and proceeding backward to $(\varphi(s)\varphi(t_1))^+ \varphi(s)$, we obtain
    \begin{align*}
            \overline{\varphi}((A,s)(B,t)) = \varphi(s_1)^+  \cdots \varphi(s_n)^+ \varphi(s)\varphi(t_1)^+ \varphi(t_2)^+ \cdots\varphi(t_m)^+ \varphi(t) = \overline{\varphi}(A,s)\overline{\varphi}(B,t),
        \end{align*}
        as desired. Clearly, $\varphi = \overline{\varphi}\iota$. The uniqueness of $\overline{\varphi}$ follows from Proposition \ref{propdecszendreisemrest}.
        
        Conversely, let $\overline{\varphi} : Sz(\cS) \to \T$ be a restriction morphism. Define $\varphi = \overline{\varphi}\iota$. If $st$ is defined, then $\iota(s)\iota(t) = (\{s^+ ,s\},s)(\{t^+ ,t\},t)$ is also defined. It follows that $\overline{\varphi}(\iota(s))\overline{\varphi}(\iota(t))$ is defined as well, which implies that $\varphi(s)\varphi(t)$ is defined. Hence,
        \begin{align*}
            \varphi(s)\varphi(t) & = \overline{\varphi}(\{s^+ ,s\},s)\overline{\varphi}(\{t^+ ,t\},t) = \overline{\varphi}((\{s^+ ,s\},s)(\{t^+ ,t\},t)) \\
            & = \overline{\varphi}((\{(st)^+ s^+ ,(st)^+ s, st^+ ,st\},st)) = \overline{\varphi}((\{(st)^+ , (st)^+ s, st^+ , st\}, st)),
        \end{align*}
        because $(st)^+  \preceq s^+ $. We can now apply Lemma \ref{lema212} together with \eqref{lr3} to write $(st)^+ s = (st^+ )^+ s = st^+ $. Consequently, $\varphi(s)\varphi(t) = \overline{\varphi}((\{(st)^+ , st^+ , st \}, st))$. On the other hand,
        \begin{align*}
            \varphi(s)^+ \varphi(st) & = \overline{\varphi}(\iota(s))^+ \overline{\varphi}(\iota(st)) = \overline{\varphi}(\{s^+ ,s\},s)^+ \overline{\varphi}(\{(st)^+ ,st\},st) \\
            & = \overline{\varphi}((\{s^+ ,s\},s)^+ )\overline{\varphi}(\{(st)^+ ,st\},st) = \overline{\varphi}(\{s^+ ,s\},s^+ )\overline{\varphi}(\{(st)^+ ,st\},st) \\
            & = \overline{\varphi}((\{s^+ ,s\},s^+ )(\{(st)^+ ,st\},st)) = \overline{\varphi}(\{(s^+ st)^+ s^+ ,(s^+ st)^+ s, s^+ (st)^+ ,s^+ st\},s^+ st).
        \end{align*}
        However, by \eqref{lr1} and Lemma \ref{lema212}, it follows that  $(s^+ st)^+ s^+  = (st)^+ s^+  = (st)^+ $, and from \eqref{lr1} and \eqref{lr4} it follows that $(s^+ st)^+ s = (st)^+ s = st^+ $. Thereby, we obtain
            $$ \varphi(s)^+ \varphi(st) = \overline{\varphi}(\{s^+ ,st^+ , st\},st) = \varphi(s)\varphi(t). $$
        
        Finally, notice that
        \begin{align*}
            \varphi(s)^+  & = \overline{\varphi}(\{s^+ ,s\},s^+ ) = \overline{\varphi}((\{s^+ ,s\},s^+ )(\{s^+ \},s^+ )) \\
            & =  \overline{\varphi}(\{s^+ ,s\},s^+ )\overline{\varphi}(\{s^+ \},s^+ ) =  \overline{\varphi}(\{s^+ ,s\},s^+ )\varphi(s^+ ).
        \end{align*}
        Since $\overline{\varphi}(\{s^+ ,s\},s^+ ) = \overline{\varphi}(\iota(s)^+ ) = \overline{\varphi}(\iota(s))^+  \in F$, it follows that $\varphi(s)^+  \preceq \varphi(s^+ )$, as desired.
    \end{proof}

   The Szendrei expansion $Sz(\cS)$ of a left restriction semigroupoid $\cS$ and Theorem \ref{teo_princ} generalize \cite[Theorem 2.4]{lawson2004actions} for groups, \cite[Theorem 6.17]{lawson2006expansions} for inverse semigroups, and \cite[Theorem 5.2]{gould2009partial} for left restriction semigroups. In the latter, restriction morphisms are often called (2,1)-morphisms. In the context of groupoids, partial (global) actions of a groupoid $\mathcal{G}$ on a set $X$ correspond to premorphisms (restriction morphisms) $\mathcal{G} \to \mathcal{PT}(X)$. Hence, Theorem \ref{teo_princ} provides a generalization for \cite[Theorem 3.3]{tamusiunas2023inverse}.

   {We also note that Theorem \ref{teo_princ} indirectly generalizes \cite[Proposition 4.6]{gilbert2005actions} for ordered groupoids and \cite[Theorem 5.1]{gould2011actions} for inductive constellations. More precisely, in \cite{lrspgesn} we construct a bijective correspondence between the class of restriction semigroupoids and the class of locally inductive constellations. Applying this correspondence to $Sz(\cS)$ (see \cite[Proposition 5.1]{lrspgesn}) we obtain the structure described by Gilbert \cite[Proposition 3.1]{gilbert2005actions} for ordered groupoids and by V. Gould and C. Hollings \cite[Definition 4.1]{gould2011actions} for ordered constellations.}

\subsection{The particular case of restriction categories}

    To date, no analogous correspondence to that of Theorem \ref{teo_princ} has been established in the context of restriction categories. In this subsection, we analyze what happens in this particular context. 
    
    The notion of right restriction category was introduced in \cite[Definition 2.1.1]{cockett2002restriction}. Analogously, one can define \emph{left restriction categories}, which constitute a particular case of left restriction semigroupoids. In the following, we specify Proposition \ref{prop:sz-semigroupoid} and Theorem \ref{teo_princ} for a left restriction category $\mathcal{C} = (\mathcal{C}_0,\mathcal{C}_1,D,R,\circ,+)$.

    \begin{lemma} \label{lema:lrcategory}
        In a left restriction category, all identity morphisms $1_e$ satisfy $1_{e}^{+} = 1_e$, for $e \in \mathcal{C}_0$.

        \begin{proof}
            By \eqref{lr1} and the fact that $1_e$ is a left and right identity, we obtain $1_e = 1_{e}^{+} 1_e = 1_{e}^{+}$.
        \end{proof}
    \end{lemma}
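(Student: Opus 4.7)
The plan is to apply axiom \eqref{lr1} directly to the identity morphism $1_e$ and then exploit the identity property twice. By \eqref{lr1}, the pair $(1_e^\ast, 1_e)$ lies in $\mathcal{C}^{(2)}$ and satisfies $1_e^\ast \circ 1_e = 1_e$. On the other hand, since $1_e$ is also a right identity at $e$, and the composability $(1_e^\ast, 1_e) \in \mathcal{C}^{(2)}$ forces $D(1_e^\ast) = R(1_e) = e$, we can evaluate the same composite as $1_e^\ast \circ 1_e = 1_e^\ast$. Chaining the two evaluations gives $1_e^\ast = 1_e^\ast \circ 1_e = 1_e$.

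The only point that requires a brief verification is the typing of $1_e^\ast$, namely that its domain is indeed $e$ so that the right-identity property of $1_e$ applies; this is immediate from the graphed semigroupoid structure (Proposition \ref{lrcategorical} together with Theorem \ref{cord24}) combined with the composability asserted by \eqref{lr1}. There is no real obstacle here: the entire argument collapses to a single chain of equalities, which is what makes the statement essentially a sanity check that the restriction structure on a left restriction category is forced to be trivial on the identities.
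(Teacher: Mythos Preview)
Your proof is correct and follows exactly the same approach as the paper's: apply \eqref{lr1} to get $1_e^\ast 1_e = 1_e$, then use that $1_e$ is a right identity to get $1_e^\ast 1_e = 1_e^\ast$, and chain. The only difference is that you spell out the typing detail $D(1_e^\ast) = e$, which the paper leaves implicit.
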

    
    \begin{prop} \label{prop:szcategory}
       The set \( Sz(\mathcal{C}) \) is a left restriction category with object set \( Sz(\mathcal{C})_0 = \mathcal{C}_0 \). For every morphism \( (A, s) \in Sz(\mathcal{C}) \), the domain and range are given by \( D(A, s) = D(s) \) and \( R(A, s) = R(s) \), respectively. Moreover, the identity morphism on an object \( e \in Sz(\mathcal{C})_0 \) is \( u_e = (\{1_e\}, 1_e) \).

        \begin{proof}
            It follows from Proposition \ref{prop:sz-semigroupoid} that $Sz(\mathcal{C})$ is a left restriction semigroupoid. Therefore, it remains to verify that $D,R \colon Sz(\mathcal{C}) \to \mathcal{C}_0$ define a directed graph structure on $Sz(\mathcal{C})$ compatible with composition, and that each $u_e$ is a (left and right) identity in $Sz(\mathcal{C})$. For $(A,s),(B,t) \in Sz(\mathcal{C})$, we have
            \begin{align*}
                (A,s)(B,t) \text{ is defined} &\iff st \text{ is defined} \\
                &\iff D(s) = R(t) \\
                &\iff D(A,s) = R(B,t).
            \end{align*}
            Furthermore, if $(A,s)(B,t)$ is defined, then
            \begin{align*}
                D((A,s)(B,t)) = D((st)^+ A \cup sB,st) = D(st) = D(t) = D(B,t),
            \end{align*}
            and
            \begin{align*}
                R((A,s)(B,t)) = R((st)^+ A \cup sB,st) = R(st) = R(s) = R(A,s).
            \end{align*}
            Therefore, $D,R \colon Sz(\mathcal{C}) \to \mathcal{C}_0$ define a directed graph structure on $Sz(\mathcal{C})$ that is compatible with composition. Let $e \in Sz(\mathcal{C})_0 = \mathcal{C}_0$. By Lemma \ref{lema:lrcategory}, we have $1_e = 1_{e}^{+}$ in $\mathcal{C}$. Hence,
                $$ u_e = (\{1_e\},1_e) = (\{1_e,1_{e}^{+}\},1_e) = \iota(1_e) \in Sz(\mathcal{C}). $$
            Since $D(u_e) = e = R(u_e)$, the composition $u_eu_e$ is defined. Suppose that $(A,s)u_e$ and $u_e(B,t)$ are defined. Since $s^+ A = A$ and $s \in A$, it follows that
            \begin{align*}
                (A,s)u_e = ( (s1_e)^+ A \cup s\{1_e\}, s1_e ) = (s^+ A \cup \{s\}, s) = (A,s).
            \end{align*}
            On the other hand, since $t^+ \in B$, we have
            \begin{align*}
                u_e(B,t) = ((1_et)^+ \{1_e\} \cup 1_eB, 1_et) = (\{t^+\} \cup B,t) = (B,t).
            \end{align*}
            This shows that each $u_e$ is an identity in $Sz(\mathcal{C})$, concluding that $Sz(\mathcal{C})$ is a category.
        \end{proof}
    \end{prop}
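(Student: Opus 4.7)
The plan is to leverage Proposition \ref{prop:sz-semigroupoid} applied to the left restriction category $\C$, which immediately yields that $Sz(\C)$ is a left restriction semigroupoid. Since a left restriction category is simply a left restriction semigroupoid equipped with a compatible directed graph structure and identities, the task reduces to three routine verifications: that the proposed $D,R$ assemble into a graph structure compatible with composition, that each $u_e$ is a well-defined element of $Sz(\C)$, and that $u_e$ acts as a two-sided identity.

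First I would verify the graph structure. By construction $(A,s)(B,t)$ is defined in $Sz(\C)$ exactly when $st$ is defined in $\C$, which holds iff $D(s) = R(t)$, i.e.\ iff $D(A,s) = R(B,t)$. Moreover, the second coordinate of the product is $st$, so $D((A,s)(B,t)) = D(st) = D(t) = D(B,t)$, and symmetrically $R((A,s)(B,t)) = R(A,s)$. This confirms conditions \eqref{gs1}--\eqref{gs3}.

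Next I would check that $u_e := (\{1_e\}, 1_e)$ belongs to $Sz(\C)$. The three membership conditions require $1_e, 1_e^\ast \in \{1_e\}$ and $\{1_e\} \subseteq (\tilde{\mathcal{L}}_E)_{1_e}$. The first and third are immediate, while the identity $1_e^\ast = 1_e$ is precisely Lemma \ref{lema:lrcategory}. Hence $u_e = \iota(1_e)$.

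Finally, assuming the compositions are defined (which forces the appropriate endpoint identifications $D(s)=e$ or $R(t)=e$), expanding the product in $Sz(\C)$ gives
\[
(A,s)\, u_e = \bigl((s\, 1_e)^\ast A \cup s\{1_e\},\, s\, 1_e\bigr) = (s^\ast A \cup \{s\}, s) = (A,s),
\]
where the last equality uses that the defining conditions of $Sz(\C)$ force $s^\ast A = A$ and $s \in A$; an analogous short computation using $t^\ast \in B$ yields $u_e (B,t) = (B,t)$. The only mild obstacle is keeping track of the set coordinates under composition, but the built-in invariance $s^\ast A = A$ makes each collapse immediate.
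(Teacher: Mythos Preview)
Your proposal is correct and follows essentially the same approach as the paper's proof: invoke Proposition~\ref{prop:sz-semigroupoid} for the left restriction semigroupoid structure, verify that $D,R$ give a compatible graph structure via the second coordinate, use Lemma~\ref{lema:lrcategory} to see $u_e=\iota(1_e)\in Sz(\C)$, and check the two-sided identity property by the same direct expansions. The computations and justifications you outline match the paper's almost line for line.
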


    From Proposition \ref{lrcategorical}, it follows that any left restriction semigroupoid $\mathcal{T}$ can be embedded into a category $\mathcal{D}$ by adjoining identity elements. Moreover, by Lemma \ref{lema:lrcategory}, the left restriction structure on $\mathcal{T}$ extends uniquely to a left restriction structure on $\mathcal{D}$ by setting $1_{e}^{+} = 1_e$.  Consequently, we may assume that every (pre)morphism $\varphi \colon \mathcal{C} \to \mathcal{T}$ has a left restriction category as its codomain.

    \begin{defi}
        A function $\varphi \colon \mathcal{C} \to \mathcal{D}$ between categories is called \textit{unitary} if $\varphi(1_e) = 1_{\varphi(e)}$ for every $e \in \mathcal{C}_0$. In particular, a \emph{unitary premorphism} (resp., \emph{unitary restriction morphism}) is a premorphism (resp., restriction morphism) between left restriction categories that is unitary.
    \end{defi}

    \begin{obs}
        Let $\mathcal{C}$ and $\mathcal{D}$ be categories.
        \begin{enumerate}
            \item Functors $\mathcal{C} \to \mathcal{D}$ are precisely unitary semigroupoid morphisms $\mathcal{C} \to \mathcal{D}$. Consequently, the restriction functors in \cite{cockett2002restriction} coincide with unitary restriction morphisms between categories.

            \item If $\mathcal{C}$ is a restriction category and $\mathcal{D}$ is equipped with the trivial left restriction ($s^+ = R(s)$, for all $s \in \mathcal{D}$), then every restriction morphism $\varphi \colon \mathcal{C} \to \mathcal{D}$ is unitary. In fact, in this case we have
                $$ \varphi(1_e) \overset{\eqref{lema:lrcategory}}{=} \varphi(1_e^+) = \varphi(1_e)^+ = R(\varphi(1_e)) = 1_{\varphi(e)}, \quad \forall e \in C_0. $$
            However, in general, restriction morphisms need not be unitary. For instance, let $M$ and $N$ be restriction monoids and $\varphi \colon M \to N$ be a unitary morphism. If we adjoin a new identity element $e$ to $N$, then $\varphi \colon M \to N \cup \{e\}$ is a restriction morphism which fails to be unitary.

            \item Unitary premorphisms between left restriction monoids coincide precisely with the unitary strong premorphisms introduced in \cite[Definition 3.7]{gould2009restriction}.
        \end{enumerate}
    \end{obs}


    \begin{theorem} \label{prop:sz-universal-cat}
        Let $\mathcal{C}$ and $\mathcal{D}$ be left restriction categories. If $\varphi \colon \mathcal{C} \to \mathcal{T}$ is a premorphism, then there is a unique restriction morphism $\overline{\varphi} \colon Sz(\mathcal{C}) \to \mathcal{D}$ such that $\varphi = \overline{\varphi} \iota$. That is, the following diagram commutes:
        \begin{center}
            \begin{tikzpicture}
                \tikzstyle{every path}=[draw, ->];
        
                \node (S) at (0,0) {$\mathcal{C}$};
                \node (Sz) at (0,-2) {$Sz(\mathcal{C})$};
                \node (T) at (2,0) {$\mathcal{D}$};
        
                \path (S) to node[above]{$\varphi$} (T);
                \path (S) to node[left]{$\iota$} (Sz);
                \path[dashed] (Sz) to node[below right]{$\overline{\varphi}$} (T);
            \end{tikzpicture}
        \end{center}
        Reciprocally, if $\overline{\varphi} \colon Sz(\mathcal{C}) \to \mathcal{D}$ is a restriction morphism, then $\varphi  = \overline{\varphi} \iota$ is a premorphism. 
        
        Moreover, $\varphi$ is a unitary premorphism if and only if $\overline{\varphi}$ is a unitary restriction morphism.

        \begin{proof}
            By Theorem \ref{teo_princ}, we have a bijective correspondence between premorphisms $\varphi \colon \mathcal{C} \to \mathcal{D}$ and restriction morphisms $\overline{\varphi} \colon Sz(\mathcal{C}) \to \mathcal{D}$. This correspondence associates to every premorphism $\varphi$ the restriction morphism $\overline{\varphi}$ defined by
                $$ \overline{\varphi}(\{s^+,s_1,\dots,s_n=s\},s) = \varphi(s_1)^+ \dots \varphi(s_n)^+ \varphi(s), $$
            and to every restriction morphism $\overline{\varphi}$ the premorphism $\varphi = \overline{\varphi} \iota$. Therefore, it remains to prove that $\varphi$ is unitary if and only if $\overline{\varphi}$ is unitary.

            Suppose that $\varphi \colon \mathcal{C} \to \mathcal{D}$ is a unitary premorphism. By Proposition \ref{prop:szcategory}, we have $Sz(\mathcal{C})_0 = \mathcal{C}_0$ and $u_e = (\{1_e\},1_e)$, for every $e \in Sz(\mathcal{C})_0$. In particular, we can identify $\overline{\varphi} \colon Sz(\mathcal{C})_0 \to \mathcal{D}_0$ with $\varphi \colon \mathcal{C}_0 \to \mathcal{D}_0$. Therefore, for every $e \in Sz(\mathcal{C})_0$, we have
                $$ \overline{\varphi}(u_e) = \varphi(1_e)^+ \varphi(1_e) = \varphi(1_e) = 1_{\varphi(e)} = 1_{\overline{\varphi}(e)}. $$
            This proves that $\overline{\varphi}$ is a unitary restriction morphism. On the other hand, assume that $\overline{\varphi} \colon Sz(\mathcal{C}) \to \mathcal{D}$ is a unitary restriction morphism. Then, for every $e \in \mathcal{C}_0$, we have
                $$ \varphi(1_e) = \overline{\varphi}(\iota(1_e)) = \overline{\varphi}( \{1_{e}^{+},1_e\},1_e ) = \overline{\varphi}(u_e) = 1_{\overline{\varphi}(e)} = 1_{\varphi(e)}. $$
            This proves that $\varphi$ is a unitary premorphism, which completes the proof.
        \end{proof}
    \end{theorem}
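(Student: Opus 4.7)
The plan is to derive this theorem almost entirely from Theorem~\ref{teo_princ}. Since a left restriction category is, in particular, a left restriction semigroupoid whose distinct set arises from the restrictions of its morphisms, Theorem~\ref{teo_princ} already furnishes the bijective correspondence between premorphisms $\varphi \colon \mathcal{C} \to \mathcal{D}$ and restriction morphisms $\overline{\varphi} \colon Sz(\mathcal{C}) \to \mathcal{D}$, together with the factorization $\varphi = \overline{\varphi}\iota$, the uniqueness of $\overline{\varphi}$, and the reciprocal direction. Hence the first two assertions of the theorem require no new proof in this context; it suffices to cite Theorem~\ref{teo_princ}.

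The genuinely new content is the equivalence of the unitary conditions. My key observation would be that, combining Proposition~\ref{prop:szcategory} with Lemma~\ref{lema:lrcategory}, the identity morphism $u_e = (\{1_e\}, 1_e)$ in $Sz(\mathcal{C})$ coincides with $\iota(1_e)$: indeed, $1_e^\ast = 1_e$ forces $\iota(1_e) = (\{1_e^\ast, 1_e\}, 1_e) = (\{1_e\}, 1_e) = u_e$. Furthermore, since $Sz(\mathcal{C})_0 = \mathcal{C}_0$ and $\iota$ acts as the identity on objects, the object components of $\overline{\varphi}$ and $\varphi$ are identified.

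With these identifications, both directions of the unitary equivalence reduce to a short chain of equalities. If $\varphi$ is unitary, then for every $e \in \mathcal{C}_0$,
$$ \overline{\varphi}(u_e) = \overline{\varphi}(\iota(1_e)) = \varphi(1_e) = 1_{\varphi(e)} = 1_{\overline{\varphi}(e)}, $$
so $\overline{\varphi}$ is unitary; conversely, reading the same chain backward, if $\overline{\varphi}$ is unitary then $\varphi(1_e) = \overline{\varphi}(u_e) = 1_{\overline{\varphi}(e)} = 1_{\varphi(e)}$. I do not anticipate any substantive obstacle here, since Theorem~\ref{teo_princ} has already absorbed all of the technical work; the remainder hinges solely on the elementary identity $u_e = \iota(1_e)$ supplied by Lemma~\ref{lema:lrcategory}.
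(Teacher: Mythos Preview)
Your proposal is correct and follows essentially the same approach as the paper: both reduce the first two assertions to Theorem~\ref{teo_princ} and then verify the unitary equivalence via the identity $u_e = \iota(1_e)$ together with the factorization $\varphi = \overline{\varphi}\iota$. The only cosmetic difference is that the paper expands $\overline{\varphi}(u_e)$ via the explicit formula as $\varphi(1_e)^\ast\varphi(1_e)$ before simplifying, whereas you invoke $\overline{\varphi}\iota = \varphi$ directly; both routes yield the same one-line computation.
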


    Let $M$ be a monoid. Then $M$ can be equipped with a trivial left restriction structure, defined by $m^+ = 1$, for every $m \in M$. In this case, for a left restriction monoid $N$, the unitary premorphisms $M \to N$ correspond precisely to the strong premorphisms defined in \cite[Definition 2.9]{hollings2007monoids}. In the latter, the unitary restriction morphisms are referred to as $(2,1,0)$-morphisms. Thus, by applying Theorem \ref{prop:sz-universal-cat} to unitary premorphisms and restriction morphisms, we obtain the correspondence in \cite[Theorem 4.1]{hollings2007monoids} for monoids.\\

\section{A final remark}

    We end this paper proving that the Szendrei expansion $Sz(\cS)$ of a left restriction semigroupoid $\cS$ is indeed an expansion, in the sense of \cite[\textsection 2]{bierget1984expansions}. In the following, we denote by $rSGPD$ the category of left restriction semigroupoids, whose objects are left restriction semigroupoids and whose morphisms are the restriction morphisms.

    \begin{defi}
        An \emph{expansion} of left restriction semigroupoids is a pair $(F,\eta)$, where $F \colon rSGPD \to rSGPD$ is a functor and $\eta \colon F \to 1_{rSGPD}$ is a natural transformation such that $\eta_{\cS} \colon F(\cS) \to \cS$ is surjective, for every object $\cS \in rSGPD_0$.
    \end{defi}

    More precisely, to each left restriction semigroupoid $\cS$, an expansion assigns a pair $(F(\cS),\eta_{\cS})$ and, to each restriction morphism $f \colon \cS \to \mathcal{T}$, it assigns a restriction morphism $Ff \colon F(\cS) \to F(\mathcal{T})$ such that the following diagram commutes:  
    \begin{center}
        \begin{tikzpicture}
            \tikzstyle{every path}=[draw,->];

            \node (S) at (0,0) {$\cS$};
            \node (T) at (2,0) {$\mathcal{T}$};
            \node (FS) at (0,2) {$F(\cS)$};
            \node (FT) at (2,2) {$F(\mathcal{T})$};

            \path (S) to node[below]{$f$} (T);
            \path (FS) to node[left]{$\eta_{\cS}$} (S);
            \path (FT) to node[right]{$\eta_{\mathcal{T}}$} (T);
            \path (FS) to node[above]{$Ff$} (FT);
        \end{tikzpicture}
    \end{center}
    Furthermore, the assignment $f \mapsto Ff$ is functorial, in the sense that $F1_{\cS} = 1_{F(\cS)}$ for every $\cS \in rSGPD_0$, and $Fg \circ Ff = F(g \circ f)$ whenever $f \colon \cS \to \mathcal{T}$ and $g \colon \mathcal{T} \to \mathcal{L}$ are morphisms in $rSGPD$.

    \begin{lemma}
        Let $f \colon \cS \to \mathcal{T}$ be a restriction morphism. Then $Sz(f) \colon Sz(\cS) \to Sz(\mathcal{T})$, defined by $Sz(f)(A,s) = (f(A),f(s))$, where $f(A) = \{ f(a) \colon a \in A \}$, is a restriction morphism.

        \begin{proof}
            The function $Sz(f) \colon Sz(\cS) \to Sz(\mathcal{T})$ is well defined. In fact, let $(A,s) \in Sz(\cS)$. Since $f$ is a restriction morphism and $a^+ = s^+$ for every $a \in A$, we have
                $$ f(a)^+ = f(a^+) = f(s^+) = f(s)^+, \ \forall a \in A. $$
            Since $s,s^+ \in A$, it follows that $f(s),f(s)^+ \in f(A)$. Therefore, $(f(A),f(s)) \in Sz(\mathcal{T})$. Now, we verify that the function $Sz(f)$ is a semigroupoid morphism. Let $(A,s),(B,t) \in Sz(\cS)$ such that $(A,s)(B,t)$ is defined. Then $st$ is defined and, hence, both $f(s)f(t)$ and $(f(A),f(s))(f(B),f(t))$ are defined. Furthermore,
            \begin{align*}
                Sz(f)( (A,s)(B,t) ) &= Sz(f)( (st)^+ A \cup sB, st ) \\
                &= ( f((st)^+ A \cup sB), f(st) ) \\
                &= ( f(s)^+ f(t)^+ f(A) \cup f(s)f(B), f(s)f(t) ) \\
                &= (f(A),f(s))(f(B),f(t)) \\
                &= Sz(f)(A,s) Sz(f)(B,t).
            \end{align*}
            At last, $Sz(f)$ is a restriction morphism, since
            \begin{align*}
                f((A,s)^+) = f(A,s^+) = (f(A),f(s^+)) = (f(A),f(s)^+) = (f(A),f(s))^+ = Sz(f)(A,s)^+.
            \end{align*}
        \end{proof}
    \end{lemma}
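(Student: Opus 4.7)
The plan is to verify the three things required of $Sz(f)$: (i) well-definedness as a map into $Sz(\T)$, (ii) preservation of composition, and (iii) preservation of the unary operation $\ast$. None of these should present a serious obstacle, since the structure of $Sz(\cS)$ is essentially transported through $f$, but (i) requires a small unpacking of what membership in $Sz(\T)$ actually demands.

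First, for well-definedness, I would take $(A,s) \in Sz(\cS)$ and check that $(f(A), f(s))$ satisfies the three conditions defining $Sz(\T)$: finiteness of $f(A)$ (immediate since $A$ is finite), the conditions $f(s), f(s)^\ast \in f(A)$, and $f(A) \subseteq (\tilde{\mathcal{L}}_F)_{f(s)}$. The condition $f(s) \in f(A)$ is trivial from $s \in A$, and $f(s)^\ast = f(s^\ast) \in f(A)$ follows since $f$ is a restriction morphism and $s^\ast \in A$. For the $\tilde{\mathcal{L}}_F$-condition, I would use Proposition \ref{propequival1}\eqref{r1}: membership of $a$ in $(\tilde{\mathcal{L}}_E)_s$ is equivalent to $a^\ast = s^\ast$. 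Thus for any $a \in A$ we have $f(a)^\ast = f(a^\ast) = f(s^\ast) = f(s)^\ast$, giving $f(a) \tilde{\mathcal{L}}_F f(s)$.

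Second, for the composition, I would take $(A,s), (B,t) \in Sz(\cS)$ with $(A,s)(B,t)$ defined, which by construction means $st$ is defined. Then $f(s)f(t) = f(st)$ is defined in $\T$, so $Sz(f)(A,s) \, Sz(f)(B,t)$ is defined in $Sz(\T)$. Expanding both sides and using $f(XY) = f(X)f(Y)$ for subsets (which holds pointwise), together with $f((st)^\ast) = f(st)^\ast = (f(s)f(t))^\ast$, the identity
\[
Sz(f)\bigl((A,s)(B,t)\bigr) = \bigl(f((st)^\ast A \cup sB), f(st)\bigr) = \bigl((f(s)f(t))^\ast f(A) \cup f(s) f(B), \, f(s)f(t)\bigr)
\]
reduces to a direct comparison, matching $Sz(f)(A,s) \, Sz(f)(B,t)$ on the nose.

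Finally, preservation of the restriction operation is an immediate one-line computation: $Sz(f)((A,s)^\ast) = Sz(f)(A, s^\ast) = (f(A), f(s^\ast)) = (f(A), f(s)^\ast) = (f(A), f(s))^\ast = Sz(f)(A,s)^\ast$. The only place requiring any care is the well-definedness step, and even there the work reduces to observing that the $\tilde{\mathcal{L}}_E$-class membership is encoded by equality of the $\ast$-values, which $f$ preserves. Hence no real obstacle arises.
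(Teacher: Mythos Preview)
Your proposal is correct and follows essentially the same three-step verification as the paper's proof: well-definedness via $a^\ast = s^\ast$ for $a \in A$ (hence $f(a)^\ast = f(s)^\ast$), preservation of the product by direct expansion, and the one-line check for $\ast$. If anything, you are slightly more explicit than the paper in noting the finiteness of $f(A)$ and in invoking Proposition~\ref{propequival1}\eqref{r1} to translate the $\tilde{\mathcal{L}}$-class condition into equality of $\ast$-values; and in the composition step you correctly write $(f(s)f(t))^\ast f(A)$, whereas the paper's displayed chain has $f(s)^\ast f(t)^\ast f(A)$ (which is not literally the same expression, though the argument is unaffected).
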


    \begin{lemma}
        Define $\eta_{\cS} \colon Sz(\cS) \to \cS$ by $\eta_{\cS}(A,s) = s$, for every $(A,s) \in Sz(\cS)$. Then $\eta_{\cS}$ is a surjective restriction morphism.

         \begin{proof}
             Since $(\{s,s^+\},s) \in Sz(\cS)$, for every $s \in \cS$, it is clear that $\eta_{\cS}$ is surjective. Since $(A,s)(B,t)$ is defined in $Sz(\cS)$ if and only if $st$ is defined in $\cS$, and in this case $(A,s)(B,t) = ((st)^+ A \cup sB,st)$, it follows that
                $$ \eta_{\cS}((A,s)(B,t)) = st = \eta_{\cS}(A,s) \eta_{\cS}(B,t). $$
            Therefore, $\eta_{\cS}$ is a semigroupoid morphism. At last, since the restriction structure in $Sz(\cS)$ is given by $(A,s)^+ = (A,s^+)$, it follows that
                $$ \eta_{\cS}((A,s)^+) = s^+ = \eta_{\cS}(A,s)^+. $$
            Hence, $\eta_{\cS} \colon Sz(\cS) \to \cS$ is a restriction morphism.
         \end{proof}
    \end{lemma}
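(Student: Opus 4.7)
The plan is to verify three things in turn: that $\eta_{\cS}$ is surjective, that it preserves the partially defined composition of the semigroupoid, and that it intertwines the $\ast$ operations. Each of these follows directly from the construction of $Sz(\cS)$ given in Proposition \ref{prop:sz-semigroupoid}, so no deep calculation is needed; the work is essentially just unpacking definitions.

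For surjectivity, I would note that for any $s \in \cS$ the pair $\iota(s) = (\{s^\ast, s\}, s)$ lies in $Sz(\cS)$ (since it satisfies the requirements on elements of $Sz(\cS)$), and clearly $\eta_{\cS}(\iota(s)) = s$. For preservation of composition, the key observation is that, by construction of $Sz(\cS)$, composability of $(A,s)$ and $(B,t)$ depends only on the second coordinates: $(A,s)(B,t)$ is defined in $Sz(\cS)$ if and only if $(s,t) \in \cS^{(2)}$, and in that case the product has second coordinate $st$. Applying $\eta_{\cS}$ to both sides of $(A,s)(B,t) = ((st)^\ast A \cup sB, st)$ immediately yields $\eta_{\cS}((A,s)(B,t)) = st = \eta_{\cS}(A,s)\,\eta_{\cS}(B,t)$.

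For the restriction structure, I would invoke Proposition \ref{prop:sz-semigroupoid}, which gives $(A,s)^\ast = (A, s^\ast)$ in $Sz(\cS)$; consequently $\eta_{\cS}((A,s)^\ast) = \eta_{\cS}(A, s^\ast) = s^\ast = \eta_{\cS}(A,s)^\ast$, so $\eta_{\cS}$ is a restriction morphism.

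There is no real obstacle here: the construction of $Sz(\cS)$ was designed precisely so that the second coordinate carries the algebraic behavior of $\cS$, and projection onto that coordinate is forced to be a restriction morphism. The only point worth being explicit about is that we are not claiming $\eta_{\cS}$ is rigid (in the sense of Section 3), but only that it is a morphism preserving $\ast$; the fact that composability in $Sz(\cS)$ mirrors composability in $\cS$ is a feature of the particular construction and is what makes even this direction of the verification automatic.
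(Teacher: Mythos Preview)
Your proof is correct and follows essentially the same approach as the paper: surjectivity via $\iota(s) = (\{s^\ast,s\},s)$, preservation of composition from the fact that the product in $Sz(\cS)$ has second coordinate $st$, and preservation of $\ast$ from $(A,s)^\ast = (A,s^\ast)$. Your closing remark about rigidity is extra commentary not present in the paper, but it is accurate and does no harm.
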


    \begin{prop}
        The pair $(Sz,\eta)$ is an expansion of left restriction semigroupoids.

        \begin{proof}
            First, we verify that $f \mapsto Sz(f)$ is functorial. It is easy to see that $Sz(1_{\cS}) = 1_{Sz(\cS)}$, for every left restriction semigroupoid $\cS$. Let $f \colon \cS \to \mathcal{T}$ and $g \colon \mathcal{T} \to \mathcal{L}$ be restriction morphisms. Then, for every $(A,s) \in Sz(\cS)$, we have
            \begin{align*}
                (Sz(g) \circ Sz(f))(A,s) = (g(f(A)),g(f(s))) = Sz(g \circ f)(A,s).
            \end{align*}
            Therefore, $Sz(g) \circ Sz(f) = Sz(g \circ f)$. It remains to prove that $\eta \colon Sz \to 1_{rSGPD}$ is a natural transformation. Let $f \colon \cS \to \mathcal{T}$ be a restriction morphism and $(A,s) \in Sz(\cS)$. Then
            \begin{align*}
                (f \circ \eta_{\cS})(A,s) = f(s) = \eta_\mathcal{T}(f(A),f(s)) = (\eta_{\mathcal{T}} \circ Sz(f))(A,s).
            \end{align*}
            That is, $f \circ \eta_{\cS} = \eta_{\mathcal{T}} \circ Sz(f)$. This concludes that $\eta \colon Sz \to 1_{rSGPD}$ is a natural transformation and, since each $\eta_{\cS}$ is surjective, $(Sz,\eta)$ is an expansion.
        \end{proof}
    \end{prop}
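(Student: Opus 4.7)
The plan is to assemble the conclusion directly from the two preceding lemmas, which already give us the object assignment $\cS \mapsto Sz(\cS)$, the morphism assignment $f \mapsto Sz(f)$, and the component maps $\eta_{\cS}$. What remains is to verify the three conditions in the definition of an expansion, namely functoriality of $Sz$, naturality of $\eta$, and surjectivity of each component $\eta_{\cS}$. The last of these is a direct consequence of the second lemma above, so I would not revisit it.

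For functoriality, I would first check that $Sz(1_{\cS}) = 1_{Sz(\cS)}$ by evaluating $Sz(1_{\cS})(A,s) = (1_{\cS}(A), 1_{\cS}(s)) = (A,s)$ on any $(A,s) \in Sz(\cS)$. Then, given restriction morphisms $f \colon \cS \to \T$ and $g \colon \T \to \mathcal{L}$, I would apply the definition to obtain, for $(A,s) \in Sz(\cS)$,
\begin{align*}
(Sz(g) \circ Sz(f))(A,s) &= Sz(g)(f(A), f(s)) \\
&= (g(f(A)), g(f(s))) \\
&= ((g \circ f)(A), (g \circ f)(s)) \\
&= Sz(g \circ f)(A,s),
\end{align*}
using only that $g(f(A)) = \{ g(f(a)) : a \in A \} = (g \circ f)(A)$.

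For naturality, given a restriction morphism $f \colon \cS \to \T$ and $(A,s) \in Sz(\cS)$, I would compute both sides of the square: $(f \circ \eta_{\cS})(A,s) = f(s)$ on one hand, and $(\eta_{\T} \circ Sz(f))(A,s) = \eta_{\T}(f(A), f(s)) = f(s)$ on the other. This gives $f \circ \eta_{\cS} = \eta_{\T} \circ Sz(f)$, which is the commutativity of the naturality square. Combining this with the functoriality of $Sz$ and the surjectivity established in the previous lemma concludes the proof.

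There is essentially no obstacle here; the work was already carried out in the two preceding lemmas, where it was shown that $Sz(f)$ is a well-defined restriction morphism and that each $\eta_{\cS}$ is a surjective restriction morphism. The present proposition is simply the bookkeeping step that packages these facts into the statement that $(Sz, \eta)$ satisfies the definition of an expansion, so the proof will consist of the two short displayed computations above.
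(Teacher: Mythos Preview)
Your proposal is correct and follows essentially the same approach as the paper's own proof: both verify functoriality of $Sz$ by checking identities and composition via the formula $Sz(f)(A,s) = (f(A), f(s))$, then verify naturality of $\eta$ by computing both composites on a generic $(A,s)$, and invoke the previously established surjectivity of each $\eta_{\cS}$. Your version is slightly more explicit (e.g., spelling out $Sz(1_{\cS}) = 1_{Sz(\cS)}$ and $g(f(A)) = (g \circ f)(A)$), but there is no substantive difference.
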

    
    \bibliographystyle{abbrvnat}
    \footnotesize{\bibliography{references}}
\end{document}